\definecolor{darkred}{rgb}{0.4,0.1,0.1}
\definecolor{darkblue}{rgb}{0.1,0.1,0.4}
\numberwithin{equation}{section}
\theoremstyle{plain}
\newtheorem{thm}{Theorem}[section]
\newtheorem{lem}[thm]{Lemma}
\newtheorem{prop}[thm]{Proposition}
\newtheorem{cor}[thm]{Corollary}
\theoremstyle{remark}
\newtheorem{remark}[thm]{Remark}
\theoremstyle{definition}
\newtheorem{example}[thm]{Example}
\newtheorem{dfn}[thm]{Definition}
\newtheorem{hypothesis}[thm]{Hypothesis}
\DeclareMathOperator{\spann}{span}
\DeclareMathOperator{\diver}{div}
\DeclareMathOperator{\curl}{curl}
\DeclareMathOperator{\Real}{Re}
\DeclareMathOperator{\Imag}{Im}
\definecolor{darkgreen}{rgb}{0.1,0.45,0.1}
\definecolor{darkblue}{rgb}{0.1,0.1,0.4}
\definecolor{darkgrey}{rgb}{0.5,0.5,0.5}
\definecolor{darkred}{rgb}{0.6,0.0,0.0}
\newcommand\void[1]{}
      \def\dC{{\mathbb C}}
   \def\dN{{\mathbb N}}   
      \def\dR{{\mathbb R}}
\def\cS{{\mathcal S}}
\def\R{\mathbb{R}}
\def\C{\mathbb{C}}
\def\N{\mathbb{N}}
\newcommand{\dom}{\mathrm{dom}\,}
\def\dd{{\,\mathrm d}}
\def\ee{{\mathrm e}}
\newcounter{counter_a}
\title[Eigenvalue inequalities for mixed boundary conditions]{Eigenvalue inequalities for the Laplacian with mixed boundary conditions}
\author[V.~Lotoreichik]{Vladimir Lotoreichik}
\address{Department of Theoretical Physics,
Nuclear Physics Institute, Czech Academy of Sciences, 250 68, 
\v{R}e\v{z} near Prague, Czechia}
\email{lotoreichik@ujf.cas.cz}
\author[J.~Rohleder]{Jonathan Rohleder}
\address{TU Hamburg \\ Institut f\"ur Mathematik \\
Am Schwarzenberg-Campus~3 \\
Geb\"aude E \\
21073 Hamburg \\
Germany}
\email{jonathan.rohleder@tuhh.de}
\keywords{Laplace operator, mixed boundary conditions, eigenvalue inequality, polyhedral domain, Lipschitz domain}
\begin{document}

\begin{abstract}
Inequalities for the eigenvalues of the (negative) Laplacian subject to mixed boundary conditions on polyhedral and more general bounded domains are established. The eigenvalues subject to a Dirichlet boundary condition on a part of the boundary and a Neumann boundary condition on the remainder of the boundary are estimated in terms of either Dirichlet or Neumann eigenvalues. The results complement several classical inequalities between Dirichlet and Neumann eigenvalues due to P\'{o}lya, Payne, Levine and Weinberger, Friedlander, and others.
\end{abstract}

\maketitle

\section{Introduction}

Properties of Laplacian eigenvalues on bounded domains subject to various boundary conditions are a classical topic in spectral theory due to the fact that these eigenvalues can be interpreted, e.g., as frequencies of vibrating membranes; see Lord Rayleigh's famous book {\em The Theory of Sound}~\cite{Rayleigh}. A prominent line of research in this context is related to inequalities between Dirichlet and Neumann eigenvalues; its history dates back at least to the 1950s. On a bounded, sufficiently regular, connected domain $\Omega \subset \R^d$ denote by 
\begin{align*}
 0 < \lambda_1 < \lambda_2 \leq \lambda_3 \leq \cdots
\end{align*}
the eigenvalues of the (negative) Laplacian subject to a Dirichlet boundary condition on the boundary $\partial \Omega$ and by
\begin{align*}
 0 = \mu_1 < \mu_2 \leq \mu_3 \leq \cdots
\end{align*}
the eigenvalues corresponding to a Neumann condition. A variational argument easily implies $\mu_k \leq \lambda_k$ for all $k \in \N$, but in fact several non-trivial improvements of this inequality were found in the course of time. In 1952 P\'olya~\cite{P52} proved $\mu_2 <\lambda_1$ in the two-dimensional case, see also Szeg\H{o}'s contribution~\cite{S54}. Shortly after, in 1955 Payne~\cite{P55} showed $\mu_{k + 2} < \lambda_k$ for all $k \in \N$ for convex, two-dimensional $\Omega$ with $C^2$-boundary. This result was extended and generalized three decades later by Levine and Weinberger~\cite{LW86}, who (amongst other estimates) obtained the inequality $\mu_{k + d} \leq \lambda_k$ for all $k \in \N$, for arbitrary convex domains. For not necessarily convex bounded $C^1$-domains the inequality $\mu_{k + 1} \leq \lambda_k$ for all $k \in \N$ was established by Friedlander~\cite{F91} in 1991, which is valid for all $d$. In 2004 Filonov~\cite{F05} showed that even $\mu_{k + 1} < \lambda_k$ for all $k \in \N$ holds in every space dimension $d \geq 2$ and for every bounded Lipschitz (and even more general) domain.

The present paper focuses on Laplacian eigenvalues for the mixed case of a Dirichlet boundary condition on a nonempty part $\Gamma = \Gamma_{\rm D}$ of $\partial \Omega$ and a Neumann condition on the complement $\Gamma_{\rm N}$ of $\Gamma_{\rm D}$ in $\partial \Omega$. These boundary conditions are ``in between'' the Neumann and Dirichlet problems in the sense that the corresponding eigenvalues
\begin{align*}
 0 < \lambda_1^\Gamma < \lambda_2^\Gamma \leq \lambda_3^\Gamma \leq \cdots
\end{align*}
satisfy 
\begin{align}\label{eq:trivial}
 \mu_k \leq \lambda_k^\Gamma \leq \lambda_k \quad \text{for~all}~k \in \N;
\end{align}
this is a trivial consequence of variational principles. Our aim here is to investigate the position of the eigenvalues of the mixed problem in comparison with the Neumann and Dirichlet eigenvalues in more detail. In general this position will depend on the size of the Dirichlet and Neumann parts $\Gamma_{\rm D}$ and $\Gamma_{\rm N}$, respectively, as well as on their geometries. In  this paper we study the case of Lipschitz domains which are polygonal or polyhedral or have some polyhedra-like properties.

In our first main result, Theorem~\ref{thm:NeumannMixed}, we provide an improvement of the first inequality in~\eqref{eq:trivial} comparing Neumann and mixed Laplacian eigenvalues. Here we assume that $\Omega$ is a bounded Lipschitz domain and that the ``Neumann part'' $\Gamma_{\rm N}$ of the boundary is small enough in the sense that there exists a nontrivial vector being tangential to almost all points in $\Gamma_{\rm N}$. Under these conditions we obtain the inequality
\begin{align}\label{eq:INeumann}
 \mu_{k + 1} \leq \lambda_k^\Gamma \quad \text{for~all}~k \in \N.
\end{align}
This result applies to several configurations. For instance, the assumptions of the theorem are satisfied if $\Gamma_{\rm N}$ is a part of $\partial \Omega$ having zero curvature into at least one direction or if, in three or more space dimensions, $\Gamma_{\rm N}$ consists of two flat parts of the boundary, e.g.\ two faces of a polyhedron; cf.\ the corollaries and examples in Section~\ref{sec:Neumann}.

Our second main result deals with the comparison of mixed and Dirichlet eigenvalues, aiming at an improvement of the second inequality in~\eqref{eq:trivial}. Due to the methods of proof used in this part of the paper we restrict ourselves to the case that $\Omega$ is a polygonal (for $d = 2$) or polyhedral (for $d \geq 3$), convex domain. Letting $l$ be the number of linearly independent vectors which are tangential to almost all points of $\Gamma_{\rm D}$, in Theorem~\ref{thm:DirichletMixed} we show the inequality
\begin{align}\label{eq:DIdentity}
 \lambda_{k + l}^\Gamma \leq \lambda_k \quad \text{for~all}~k \in \N.
\end{align}
If, for instance, $\Gamma_{\rm D}$ is one face of the polyhedral domain $\Omega \subset \R^d$ then~\eqref{eq:DIdentity} implies
\begin{align*}
 \lambda_{k + d - 1}^\Gamma \leq \lambda_k \quad \text{for~all}~k \in \N.
\end{align*}
Thus the comparison of mixed and Dirichlet eigenvalues exhibits some dimension dependence similar to the comparison of Neumann and Dirichlet eigenvalues found in~\cite{LW86}. On the other hand, if $\Gamma_{\rm D}$ consists of at most $d - 1$ pairwise non-parallel faces then~\eqref{eq:DIdentity} yields
\begin{align*}
 \lambda_{k + 1}^\Gamma \leq \lambda_k \quad \text{for~all}~k \in \N.
\end{align*}
For further consequences of Theorem~\ref{thm:DirichletMixed} we refer the reader to the corollaries in Section~\ref{sec:Dirichlet}.

We point out that in general none of the inequalities~\eqref{eq:INeumann} and~\eqref{eq:DIdentity} is strict. This can be seen from simple examples of a square or a cube and proper choices of the Dirichlet and Neumann parts of the boundary, see Example~\ref{ex:square} and Example~\ref{ex:cube} below. However, under additional assumptions on the choice of $\Gamma_{\rm D}$ and $\Gamma_{\rm N}$ strict inequality can be obtained, see Corollary~\ref{cor:strict} and Corollary~\ref{cor:DirichletMixedOneFace}.

The proofs of our main results are based on variational principles and proper choices of test functions. For the proof of~\eqref{eq:INeumann} we choose an exponential function suitable to the joint tangent vector of $\Gamma_{\rm N}$; cf.~\cite{F05} for the use of an exponential test function in the comparison of Neumann and Dirichlet eigenvalues. For the proof of~\eqref{eq:DIdentity} we employ appropriate linear combinations of derivatives of Dirichlet eigenfunctions as test functions; this is motivated by~\cite{LW86}. However, our calculations differ essentially from those made in~\cite{LW86} as the mentioned work makes use of differential geometric tools and curvature properties of the boundary while the proof of our Theorem~\ref{thm:DirichletMixed} relies on an integral identity for polyhedral domains (which fails for general, curved domains). For $d = 2$ this identity is contained in Grisvard's classical book~\cite{G85}; in the appendix of the present work we provide a proof of it for arbitrary dimensions.

Finally, let us mention that eigenvalue inequalities for Laplacians and more general elliptic operators where studied recently in~\cite{AM12, BRS16,FL10,GM09,K10, LR15, R14}. Especially inequalities for Laplacian eigenvalues of particular polygonal domains like triangles and rhombi have attracted interest recently due to applications to the hot spots conjecture and other problems, see, e.g.,~\cite{S15,S16}. For further literature on mixed elliptic boundary value problems (sometimes also called Zaremba problems) we refer the reader to~\cite{A11,B94,G11,P16,S02,S68}. For elliptic boundary value problems on polygonal and polyhedral domains see the monographs~\cite{D88, G85,MR10}.

\section{Preliminaries: Laplacian eigenvalue problems with mixed boundary conditions}
\label{sec:prelim}

Let us first fix some notation and recall some basic facts. Throughout the whole paper $\Omega \subset \R^d$, $d \geq 2$, is a bounded, connected Lipschitz domain. Recall that by Rademacher's theorem for almost all $x' \in \partial \Omega$ there exists a well-defined outer unit normal vector $\nu (x')$. Consequently, the $(d - 1)$-dimensional tangential hyperplane
\begin{align}\label{eq:tangential}
 T_{x'} = \bigg\{\tau = (\tau_1, \dots, \tau_d)^\top \in \R^d : \sum_{j = 1}^d \tau_j \nu_j (x') = 0 \bigg\}
\end{align}
can be defined for almost all $x' \in \partial \Omega$. We denote by $H^k (\Omega)$ the Sobolev spaces of orders $k \geq 1$ on $\Omega$ and by $H^s (\partial \Omega)$ the Sobolev spaces of orders $s \in [- 1/2, 1/2]$ on $\partial \Omega$; in particular, for $s \in [0, 1/2]$ the space $H^{- s} (\partial \Omega)$ is the dual of $H^{s} (\partial \Omega)$. For $u \in H^1 (\Omega)$ we denote by $u |_{\partial \Omega} \in H^{1/2} (\partial \Omega)$ the trace of $u$. Moreover, if $u \in H^1 (\Omega)$ with $\Delta u \in L^2 (\Omega)$ distributionally then the normal derivative $\partial_\nu u |_{\partial \Omega} \in H^{- 1/2} (\partial \Omega)$ of $u$ on $\partial \Omega$ can be defined via Green's identity
\begin{align}\label{eq:Green}
 \int_\Omega \nabla u \cdot \overline{\nabla v} \dd x = - \int_\Omega \Delta u \overline{v} \dd x + (\partial_\nu u |_{\partial \Omega}, v |_{\partial \Omega})_{\partial \Omega}, \quad v \in H^1 (\Omega),
\end{align}
where $(\cdot, \cdot)_{\partial \Omega}$ is the (sesquilinear) duality between $H^{- 1/2} (\partial \Omega)$ and $H^{1/2} (\partial \Omega)$; cf., e.g.,~\cite[Chapter~4]{McL}. If $u$ is sufficiently regular up to the boundary, for instance $u \in H^2 (\Omega)$, then $\partial_\nu u |_{\partial \Omega} = \nu \cdot \nabla u |_{\partial \Omega}$ almost everywhere on $\partial \Omega$; in this case the duality in~\eqref{eq:Green} turns into the boundary integral of $\partial_\nu u |_{\partial \Omega} \overline{v |_{\partial \Omega}}$ with respect to the standard surface measure on $\partial \Omega$. In the following, for a relatively open subset $\omega$ of $\partial \Omega$ we write $\partial_\nu u |_{\omega} = 0$ if
\begin{align}\label{eq:partNeumann}
 (\partial_\nu u |_{\partial \Omega}, v |_{\partial \Omega} )_{\partial \Omega} = 0 \quad \text{for~all}~v \in 
 H^1 (\Omega)~\text{such~that}~v |_{\partial \Omega \setminus \overline \omega} = 0.
\end{align}
Note that for $u$ being sufficiently regular in a neighborhood of $\omega$ the condition~\eqref{eq:partNeumann} simply means $(\nu \cdot \nabla u) |_\omega = 0$.

In order to write down the mixed Dirichlet--Neumann eigenvalue problem, we make the following assumptions.

\begin{hypothesis}\label{hyp}
We assume that $\Gamma = \Gamma_{\rm D}$ and $\Gamma_{\rm N}$ are two relatively open, non\-empty subsets of $\partial \Omega$ such that $\Gamma_{\rm D} \cap \Gamma_{\rm N} = \emptyset$ and $\partial \Omega \setminus (\Gamma_{\rm D} \cup \Gamma_{\rm N})$ has measure zero.
\end{hypothesis}

Under the assumption of Hypothesis~\ref{hyp} we define
\begin{align*}
 H_{0, \Gamma}^1 (\Omega) = \left\{u \in H^1 (\Omega) : u |_{\Gamma} = 0 \right\},
\end{align*}
the space of functions in $H^1 (\Omega)$ whose trace vanishes on $\Gamma$. The negative Laplacian subject to a Dirichlet boundary condition on $\Gamma = \Gamma_{\rm D}$ and a Neumann boundary condition on $\Gamma_{\rm N}$ is given by
\begin{align*}
 - \Delta_\Gamma u = - \Delta u, \quad \dom (- \Delta_\Gamma) = \left\{ u \in H_{0, \Gamma}^1 (\Omega) : \Delta u \in L^2 (\Omega), \partial_\nu u |_{\Gamma_{\rm N}} = 0 \right\}.
\end{align*}
The operator $- \Delta_\Gamma$ is selfadjoint in $L^2 (\Omega)$ and has a purely discrete spectrum. In fact, $- \Delta_\Gamma$ corresponds to the closed, nonnegative, symmetric sesquilinear form $\{u, v\} \mapsto \int_\Omega \nabla u \cdot \overline{\nabla v} \dd x$ with domain $H_{0, \Gamma}^1 (\Omega)$; cf.~\cite{BS87,Kato,S12} for more details on semi-bounded selfadjoint operators and corresponding quadratic forms. Therefore the eigenvalues of the mixed Laplacian $- \Delta_\Gamma$, ordered nondecreasingly and counted with multiplicities, are given by the min-max principle
\begin{align}\label{eq:minmax}
 \lambda_k^\Gamma = \min_{\substack{L \subset H_{0, \Gamma}^1 (\Omega) \\ \dim L = k}} \,\, \max_{u \in L \setminus \{0\}} \frac{\int_\Omega |\nabla u|^2 \dd x}{\int_\Omega |u|^2 \dd x}, \quad k \in \N.
\end{align}
As is well-known, the eigenvalues of the selfadjoint Laplacian with a Neumann boundary condition on the whole boundary $\partial \Omega$ are given by
\begin{align}\label{eq:minMaxNeumann}
 \mu_k = \min_{\substack{L \subset H^1 (\Omega) \\ \dim L = k}} \,\, \max_{u \in L \setminus \{0\}} \frac{\int_\Omega |\nabla u|^2 \dd x}{\int_\Omega |u|^2 \dd x}, \quad k \in \N.
\end{align}

In the following we provide a first, simple observation on the behavior of the eigenvalues of the mixed problem when the Dirichlet part of the boundary is increased. The next, preparatory lemma is a simple consequence of a unique continuation principle; it can be proven similar to~\cite[Lemma~3.1]{R14}.

\begin{lem}\label{lem:UCP}
Let $\Omega$ be a bounded, connected Lipschitz domain, let $\lambda \in \R$ and let $u \in H^1 (\Omega)$ be such that $- \Delta u = \lambda u$. If $\omega \subset \partial \Omega$ is a relatively open, nonempty set such that $u |_\omega = 0$ and $\partial_\nu u |_\omega = 0$ then $u = 0$ identically on $\Omega$.
\end{lem}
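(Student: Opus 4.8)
The plan is to reduce the statement to the interior unique continuation property of the Helmholtz equation, mirroring the scheme of \cite[Lemma~3.1]{R14}. Since $\lambda$ is constant, the equation $-\Delta u = \lambda u$ has real-analytic (indeed constant) coefficients and is elliptic, hence analytic-hypoelliptic; consequently every $H^1$-solution is real-analytic in the interior of its domain of definition. As a real-analytic function on a connected open set that vanishes on a nonempty open subset vanishes identically, it suffices to extend $u$ across the boundary piece $\omega$ to a solution on a slightly larger connected open set on which it already vanishes on an open subset. Note that $\Delta u = -\lambda u \in L^2(\Omega)$, so Green's identity~\eqref{eq:Green} is applicable to $u$ and the normal derivative $\partial_\nu u|_{\partial\Omega} \in H^{-1/2}(\partial\Omega)$ is well defined.

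First I would fix $x_0 \in \omega$ and choose $r > 0$ so small that $\overline{B} \cap \partial\Omega \subset \omega$, where $B = B(x_0, r)$; this is possible because $\omega$ is relatively open in $\partial\Omega$. Set $\widetilde\Omega = \Omega \cup B$, which is open and, since $\Omega$ is connected and $B$ meets $\Omega$, also connected, and note that $\partial\Omega \cap \widetilde\Omega \subset \omega$. Let $\widetilde u$ be the extension of $u$ by zero, i.e.\ $\widetilde u = u$ on $\Omega$ and $\widetilde u = 0$ on $B \setminus \overline\Omega$. Because $\Omega$ is Lipschitz and $x_0 \in \partial\Omega$, the set $B \setminus \overline\Omega$ is a nonempty open subset of $\widetilde\Omega$ on which $\widetilde u$ vanishes.

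The core of the argument is to show that $\widetilde u \in H^1(\widetilde\Omega)$ and that it weakly solves $-\Delta\widetilde u = \lambda \widetilde u$ on $\widetilde\Omega$; this is where the two boundary conditions enter. To obtain $\widetilde u \in H^1(\widetilde\Omega)$ I would compute the distributional gradient against vector-valued test functions $\varphi \in C_c^\infty(\widetilde\Omega; \R^d)$: integration by parts on $\Omega$ yields a boundary term pairing $u|_{\partial\Omega}$ with $\varphi \cdot \nu$, and since $\supp\varphi \cap \partial\Omega \subset \omega$ while $u|_\omega = 0$, this term vanishes; hence $\nabla\widetilde u$ is the zero-extension of $\nabla u$ and lies in $L^2(\widetilde\Omega)$. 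For the equation I would test with $\varphi \in C_c^\infty(\widetilde\Omega)$ and apply~\eqref{eq:Green} on $\Omega$: the term $-\int_\Omega \Delta u\,\overline\varphi\dd x$ equals $\lambda \int_{\widetilde\Omega} \widetilde u\,\overline\varphi\dd x$ since $\Delta u = -\lambda u$, whereas the duality $(\partial_\nu u|_{\partial\Omega}, \varphi|_{\partial\Omega})_{\partial\Omega}$ vanishes because $\varphi|_{\partial\Omega \setminus \overline\omega} = 0$ (as $\supp\varphi \cap \partial\Omega \subset \omega$) together with $\partial_\nu u|_\omega = 0$ via~\eqref{eq:partNeumann}. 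This gives the weak form $\int_{\widetilde\Omega} \nabla\widetilde u \cdot \overline{\nabla\varphi}\dd x = \lambda \int_{\widetilde\Omega} \widetilde u\,\overline\varphi\dd x$.

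With $\widetilde u$ a weak solution on the connected set $\widetilde\Omega$ vanishing on the open set $B \setminus \overline\Omega$, interior analyticity and the identity theorem for real-analytic functions force $\widetilde u \equiv 0$ on $\widetilde\Omega$, and therefore $u \equiv 0$ on $\Omega$. I expect the gluing step to be the main obstacle: one must check that the zero-extension produces neither a surface-supported singular part in $\nabla\widetilde u$ (excluded by $u|_\omega = 0$) nor a surface-supported contribution to $\Delta\widetilde u$ (excluded by $\partial_\nu u|_\omega = 0$). The remaining ingredients, namely analytic hypoellipticity of $-\Delta - \lambda$ and the identity principle, are standard.
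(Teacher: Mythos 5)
Your argument is correct and is essentially the proof the paper has in mind: the paper gives no details but refers to \cite[Lemma~3.1]{R14}, whose proof is exactly this extension-by-zero across a boundary patch where both Cauchy data vanish, followed by interior unique continuation (here available via real-analyticity of solutions of the Helmholtz equation) on the enlarged connected open set. All the delicate points --- that $B\setminus\overline\Omega$ is a nonempty open set for a Lipschitz domain, that $u|_\omega=0$ kills the surface term in the distributional gradient, and that $\partial_\nu u|_\omega=0$ in the sense of \eqref{eq:partNeumann} kills the duality term in \eqref{eq:Green} --- are correctly identified and handled.
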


The previous lemma can be used to derive the following strict monotonicity principle, which will be used in the following sections.

\begin{prop}\label{prop:monotonicity}
Assume that $\Gamma \subset \Gamma' \subset \partial \Omega$ are nonempty, relatively open sets such that $\Gamma' \setminus \Gamma$ has a nontrivial interior. Then
\begin{align*}
 \lambda_k^\Gamma < \lambda_k^{\Gamma'}
\end{align*}
holds for all $k \in \N$.
\end{prop}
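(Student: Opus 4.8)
The plan is to combine the elementary (non-strict) monotonicity coming from the inclusion of form domains with the unique continuation principle of Lemma~\ref{lem:UCP} to upgrade it to a strict inequality. First I would record the non-strict bound. Since $\Gamma \subset \Gamma'$, every $u \in H^1 (\Omega)$ with $u |_{\Gamma'} = 0$ also satisfies $u |_{\Gamma} = 0$, so $H^1_{0, \Gamma'} (\Omega) \subseteq H^1_{0, \Gamma} (\Omega)$. As the Rayleigh quotient in \eqref{eq:minmax} is the same for both problems, minimizing over the smaller collection of subspaces can only enlarge the min-max value, whence $\lambda_k^\Gamma \le \lambda_k^{\Gamma'}$ for every $k \in \N$.

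Next I argue by contradiction, assuming $\lambda_k^\Gamma = \lambda_k^{\Gamma'} =: \lambda$ for some $k$. Let $\psi_1', \dots, \psi_k'$ be $L^2$-orthonormal eigenfunctions of $-\Delta_{\Gamma'}$ for the eigenvalues $\lambda_1^{\Gamma'} \le \dots \le \lambda_k^{\Gamma'} = \lambda$ and set $\mathcal{L}' = \spn \{\psi_1', \dots, \psi_k'\}$; then $\mathcal{L}' \subseteq H^1_{0, \Gamma'} (\Omega) \subseteq H^1_{0, \Gamma} (\Omega)$ and the Rayleigh quotient of every nonzero element of $\mathcal{L}'$ is at most $\lambda$. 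Let $\psi_1, \dots, \psi_{k-1}$ be eigenfunctions of $-\Delta_\Gamma$ for $\lambda_1^\Gamma, \dots, \lambda_{k-1}^\Gamma$. Imposing the $k - 1$ linear conditions $u \perp \psi_1, \dots, \psi_{k-1}$ on the $k$-dimensional space $\mathcal{L}'$ leaves a nonzero $u$; for this $u$ the Rayleigh quotient is $\le \lambda$ (as $u \in \mathcal{L}'$) and $\ge \lambda_k^\Gamma = \lambda$ by the Courant--Fischer characterization of $\lambda_k^\Gamma$, hence equal to $\lambda$. Thus $u$ minimizes the Rayleigh quotient over $\{v \in H^1_{0, \Gamma} (\Omega) : v \perp \psi_1, \dots, \psi_{k-1}\}$, and a standard Euler--Lagrange computation (using selfadjointness and $u \perp \psi_j$ to discard the constraint terms) shows that $u$ is an eigenfunction of $-\Delta_\Gamma$ with eigenvalue $\lambda$; in particular $-\Delta u = \lambda u$, and by \eqref{eq:Green} the natural boundary condition $\partial_\nu u |_{\partial\Omega \setminus \Gamma} = 0$ holds in the sense of \eqref{eq:partNeumann}.

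The contradiction is then immediate. Set $\omega = \mathrm{int} (\Gamma' \setminus \Gamma)$, which is nonempty by the assumption that $\Gamma' \setminus \Gamma$ has nontrivial interior. Since $u \in \mathcal{L}' \subseteq H^1_{0, \Gamma'} (\Omega)$ we have $u |_\omega = 0$, and since $\omega \subseteq \partial\Omega \setminus \Gamma$ the natural Neumann condition gives $\partial_\nu u |_\omega = 0$. Lemma~\ref{lem:UCP} then forces $u \equiv 0$ on $\Omega$, contradicting $u \neq 0$, and therefore $\lambda_k^\Gamma < \lambda_k^{\Gamma'}$.

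The principal subtlety lies in the middle step: extracting from the equality $\lambda_k^\Gamma = \lambda_k^{\Gamma'}$ a single function that is simultaneously an eigenfunction of $-\Delta_\Gamma$ (so that the natural Neumann condition on $\partial\Omega \setminus \Gamma$, and hence on $\omega$, is available) and that vanishes on all of $\Gamma'$. This requires both the dimension count producing a nonzero $u \in \mathcal{L}'$ orthogonal to $\psi_1, \dots, \psi_{k-1}$ and the verification that a constrained Rayleigh-quotient minimizer attaining the value $\lambda_k^\Gamma$ is a genuine eigenfunction. Once such $u$ is in hand, overdetermination on $\omega$ together with Lemma~\ref{lem:UCP} closes the argument.
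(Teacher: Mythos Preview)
Your proof is correct, but the route differs from the paper's. Both arguments use Lemma~\ref{lem:UCP} at the decisive point, applied to the open set $\omega = \mathrm{int}(\Gamma'\setminus\Gamma)$, and both start from the $k$-dimensional span $L$ of the first $k$ eigenfunctions of $-\Delta_{\Gamma'}$. From there the strategies diverge. You intersect $L$ with the orthogonal complement of the first $k-1$ eigenfunctions of $-\Delta_\Gamma$, pick a nonzero $u$ there, and show via equality in the Rayleigh quotient and an Euler--Lagrange argument that $u$ is itself a $\Gamma$-eigenfunction; overdetermined Cauchy data on $\omega$ then kills $u$. The paper instead augments $L$ by the entire eigenspace $\ker(-\Delta_\Gamma - \lambda)$, uses Green's identity to show the form bound $\fra[w,w]\le\lambda\|w\|^2$ persists on the sum, invokes Lemma~\ref{lem:UCP} to see $L\cap\ker(-\Delta_\Gamma-\lambda)=\{0\}$, and concludes $\lambda_{k+\dim\ker(-\Delta_\Gamma-\lambda)}^\Gamma\le\lambda$; equality $\lambda_k^\Gamma=\lambda$ would then force the multiplicity of $\lambda$ for $-\Delta_\Gamma$ to exceed $\dim\ker(-\Delta_\Gamma-\lambda)$, which is absurd. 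The paper's version sidesteps the Euler--Lagrange step entirely, trading it for a short multiplicity count; your version is more constructive in that it exhibits the single overdetermined eigenfunction directly.
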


\begin{proof}
Let $k \in \N$ and $\lambda = \lambda_k^{\Gamma'}$. By the min-max principle~\eqref{eq:minmax} there exists a subspace $L \subset H_{0, \Gamma'}^1 (\Omega)$ with $\dim L = k$ such that
\begin{align*}
 \int_\Omega |\nabla u|^2 \dd x \leq \lambda \int_\Omega |u|^2 \dd x, \quad u \in L.
\end{align*}
Hence for all $u \in L$ and all $v \in \ker (- \Delta_\Gamma - \lambda)$ we have $u + v \in H_{0, \Gamma}^1 (\Omega)$ and
\begin{align}\label{eq:usualCalc}
\begin{split}
 \int_\Omega |\nabla (u + v)|^2 \dd x & = \int_\Omega |\nabla u|^2 \dd x + 2 \Real \int_\Omega \nabla v \cdot \overline{\nabla u} \dd x + \int_\Omega |\nabla v|^2 \dd x \\
 & \leq \lambda \int_\Omega |u|^2 \dd x + 2 \lambda \Real \int_\Omega v \overline{u} \dd x + \lambda \int_\Omega |v|^2 \dd x \\
 & = \lambda \int_\Omega |u + v|^2 \dd x,
\end{split}
\end{align}
where we have used Green's identity~\eqref{eq:Green} as well as $u |_\Gamma = 0$ and $\partial_\nu v |_{\partial \Omega \setminus \overline \Gamma} = 0$. Moreover, $L \cap \ker (- \Delta_\Gamma - \lambda) = \{0\}$, which follows from Lemma~\ref{lem:UCP} when choosing $\omega$ to be the interior of $\Gamma' \setminus \Gamma$. Thus
\begin{align*}
 \dim \big( L + \ker (- \Delta_\Gamma - \lambda) \big) = k + \dim \ker (- \Delta_\Gamma - \lambda)
\end{align*}
and with~\eqref{eq:usualCalc} and the min-max principle it follows
\begin{align}\label{eq:dasIsses}
 \lambda_k^\Gamma \leq \lambda_{k + \dim \ker (- \Delta_\Gamma - \lambda)}^\Gamma \leq \lambda.
\end{align}
Since $\lambda_k^\Gamma = \lambda$ together with~\eqref{eq:dasIsses} would imply $\lambda_k^\Gamma = \lambda_{k + \dim \ker (- \Delta_\Gamma - \lambda)}^\Gamma = \lambda$, i.e., $\lambda$ is an eigenvalue of $-\Delta_\Gamma$ of multiplicity $\dim \ker (- \Delta_\Gamma - \lambda) + 1$ or larger, a contradiction, it follows $\lambda_k^\Gamma < \lambda = \lambda_k^{\Gamma'}$.
\end{proof}

Polygonal and (multidimensional) polyhedral domains play an important role in the following sections. In order to avoid ambiguities we give the following definition.

\begin{dfn}\label{def:poly}
Let $\Omega \subset \R^d$, $d \geq 2$, be a bounded, connected Lipschitz domain. 
\begin{enumerate}
 \item If $d = 2$ we say that $\Omega$ is a polyhedral (or polygonal) domain if $\partial \Omega$ is the union of finitely many line segments.
 \item Recursively, if $d \geq 3$ we say that $\Omega$ is a polyhedral domain if for each ($d - 1$)-dimensional affine hyperplane $H \subset \R^d$ the intersection $H \cap \Omega$ is either a polyhedral domain in $\R^{d - 1}$ (where we identify $H$ with $\R^{d - 1}$) or empty.
\end{enumerate}
\end{dfn}

Note that in the case $d = 3$ a bounded Lipschitz domain is polyhedral if and only if its boundary is the union of finitely many polygonal faces.

\section{Inequalities for Neumann and mixed eigenvalues}\label{sec:Neumann}

In this section we compare Neumann and mixed Laplacian eigenvalues for polyhedral and more general domains in any space dimension $d \geq 2$. We assume that $\Omega \subset \R^d$, $d \geq 2$, is a bounded, connected Lipschitz domain and that Hypothesis~\ref{hyp} is satisfied. As before we denote by $0 = \mu_1 < \mu_2 \leq \mu_2 \leq \dots$ the Neumann Laplacian eigenvalues and by $\lambda_1^\Gamma < \lambda_2^\Gamma \leq \lambda_3^\Gamma \leq \dots$ the eigenvalues of $- \Delta_\Gamma$. 

For the following theorem recall that the tangential hyperplane $T_{x'}$ exists for almost all $x' \in \partial \Omega$; cf.~\eqref{eq:tangential}. We define $\hat \Gamma_{\rm N}$ to be the set of all $x' \in \Gamma_{\rm N}$ such that $T_{x'}$ exists. We define the linear subspace
\begin{align*}
 \cS(\Gamma_{\rm N}) := \bigcap_{x' \in \hat \Gamma_{\rm N}} T_{x'}
\end{align*}
of $\R^d$ consisting of all vectors being tangential to all $x' \in \Gamma_{\rm N}$ apart from a set of measure zero. With this notation the main result of this section looks as follows.

\begin{thm}\label{thm:NeumannMixed}
Let $\Omega \subset \R^d$, $d \geq 2$, be a bounded, connected Lipschitz domain and let Hypothesis~\ref{hyp} be satisfied. If $\dim \cS(\Gamma_{\rm N}) \geq 1$ then 
\begin{align}\label{eq:Jawoll}
 \mu_{k+1} \leq \lambda_k^\Gamma
\end{align}
holds for all $k\in\dN$.
\end{thm}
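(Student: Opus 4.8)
The plan is to mimic Filonov's argument from \cite{F05}, feeding into the Neumann min-max principle~\eqref{eq:minMaxNeumann} a carefully chosen $(k+1)$-dimensional trial space. First I would fix $k \in \N$, set $\lambda := \lambda_k^\Gamma$ (which is positive), and let $\phi_1, \dots, \phi_k \in H^1_{0,\Gamma}(\Omega)$ be $L^2$-orthonormal eigenfunctions of $-\Delta_\Gamma$ for the eigenvalues $\lambda_1^\Gamma \le \dots \le \lambda_k^\Gamma$. Since $\dim \cS(\Gamma_{\rm N}) \ge 1$, I choose a unit vector $\tau \in \cS(\Gamma_{\rm N})$, so that $\tau \cdot \nu(x') = 0$ for almost every $x' \in \Gamma_{\rm N}$, and define the exponential
\begin{align*}
 \psi(x) = \ee^{\,\mathrm{i}\sqrt{\lambda}\,\tau \cdot x}, \qquad x \in \Omega.
\end{align*}
Then $\psi$ is smooth, hence in $H^1(\Omega)$, it satisfies $-\Delta \psi = \lambda \psi$ and $|\nabla \psi|^2 = \lambda |\psi|^2$ pointwise, and crucially $\partial_\nu \psi = \mathrm{i}\sqrt{\lambda}\,(\tau \cdot \nu)\,\psi$ vanishes almost everywhere on $\Gamma_{\rm N}$. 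The trial space is $L = \spn\{\phi_1, \dots, \phi_k, \psi\}$.

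The heart of the argument is the vanishing of the mixed terms, for which I would insert $u = \psi$, $v = \phi_j$ into Green's identity~\eqref{eq:Green}:
\begin{align*}
 \int_\Omega \nabla \psi \cdot \overline{\nabla \phi_j} \dd x = - \int_\Omega \Delta \psi\,\overline{\phi_j} \dd x + (\partial_\nu \psi |_{\partial\Omega}, \phi_j |_{\partial\Omega})_{\partial\Omega} = \lambda \int_\Omega \psi\,\overline{\phi_j} \dd x,
\end{align*}
because $-\Delta \psi = \lambda \psi$ and the boundary pairing --- here a genuine surface integral, as $\partial_\nu \psi$ is smooth --- splits over $\Gamma_{\rm D}$ and $\Gamma_{\rm N}$ and vanishes on each piece: on $\Gamma_{\rm D}$ the trace $\phi_j |_{\Gamma_{\rm D}} = 0$, while on $\Gamma_{\rm N}$ we have $\partial_\nu \psi = 0$. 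Taking complex conjugates (and using that $\lambda$ is real) gives the companion identity $\int_\Omega \nabla \phi_j \cdot \overline{\nabla \psi} \dd x = \lambda \int_\Omega \phi_j\,\overline{\psi} \dd x$. Consequently, for an arbitrary $w = \sum_{j=1}^k c_j \phi_j + c\,\psi \in L$ every cross term in $\int_\Omega |\nabla w|^2 \dd x$ matches the corresponding cross term in $\lambda \int_\Omega |w|^2 \dd x$, and using $\int_\Omega \nabla \phi_j \cdot \overline{\nabla \phi_l} \dd x = \lambda_j^\Gamma \delta_{jl}$ together with $|\nabla \psi|^2 = \lambda |\psi|^2$ I obtain
\begin{align*}
 \int_\Omega |\nabla w|^2 \dd x - \lambda \int_\Omega |w|^2 \dd x = \sum_{j=1}^k |c_j|^2 \bigl(\lambda_j^\Gamma - \lambda\bigr) \le 0,
\end{align*}
since $\lambda_j^\Gamma \le \lambda = \lambda_k^\Gamma$ for $j \le k$.

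It remains to check that $\dim L = k + 1$ and to conclude. Linear independence follows since $\psi \notin M := \spn\{\phi_1, \dots, \phi_k\}$: every element of $M$ lies in $H^1_{0,\Gamma}(\Omega)$ and hence has vanishing trace on $\Gamma_{\rm D}$, whereas $|\psi| \equiv 1$ cannot vanish on the nonempty relatively open set $\Gamma_{\rm D}$. Feeding the $(k+1)$-dimensional space $L \subset H^1(\Omega)$ into the Neumann min-max principle~\eqref{eq:minMaxNeumann} then yields
\begin{align*}
 \mu_{k+1} \le \max_{w \in L \setminus \{0\}} \frac{\int_\Omega |\nabla w|^2 \dd x}{\int_\Omega |w|^2 \dd x} \le \lambda = \lambda_k^\Gamma.
\end{align*}
The one genuinely delicate point --- and the reason the tangency hypothesis $\dim \cS(\Gamma_{\rm N}) \ge 1$ is needed --- is the simultaneous requirement that the trial exponential be an eigenfunction of $-\Delta$ at level $\lambda_k^\Gamma$ \emph{and} satisfy $\partial_\nu \psi = 0$ on $\Gamma_{\rm N}$, so that the boundary term above is killed jointly by the Dirichlet condition on $\Gamma_{\rm D}$ and the tangency on $\Gamma_{\rm N}$; choosing the frequency vector $\sqrt{\lambda}\,\tau$ along a common tangent direction of $\Gamma_{\rm N}$ is exactly what makes both hold. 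Everything else is the routine bookkeeping of the quadratic form on $L$.
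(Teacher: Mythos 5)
Your proposal is correct and follows essentially the same route as the paper: an exponential test function $\ee^{\,\mathrm{i}\sqrt{\lambda}\,\tau\cdot x}$ with frequency along a common tangent of $\Gamma_{\rm N}$, Green's identity to kill the cross terms via the Dirichlet condition on $\Gamma_{\rm D}$ and the tangency on $\Gamma_{\rm N}$, and the Neumann min-max principle applied to the resulting $(k+1)$-dimensional space. The only cosmetic difference is that the paper works with an arbitrary $k$-dimensional subspace furnished by the min-max principle rather than with an explicit orthonormal eigenbasis, which changes nothing of substance.
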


\begin{proof}
Let $k \in \N$ and define $\lambda = \lambda_k^\Gamma > 0$. By the min-max principle~\eqref{eq:minmax} there exists a subspace $L$ of $H_{0, \Gamma}^1 (\Omega)$ such that $\dim L = k$ and
\begin{align}\label{eq:EigenspaceMinMax}
 \int_\Omega |\nabla u|^2 \dd x \leq \lambda \int_\Omega |u|^2 \dd x
\end{align}
holds for all $u \in L$. Due to the assumption $\dim \cS (\Gamma_{\rm N}) \geq 1$ there exists a vector $\omega_0 \in \cS (\Gamma_{\rm N})$ such that $|\omega_0| = \sqrt{\lambda}$ holds. Letting $v (x) = e^{i \omega_0 \cdot x}$, $x \in \Omega$, we have $v \in H^2 (\Omega)$, $\nabla v = i \omega_0 v$, and $- \Delta v = \lambda v$. With the help of~\eqref{eq:EigenspaceMinMax}, for each $u \in L$ and each $c \in \dC$ we obtain
\begin{align}\label{eq:anfang}
\begin{split}
 \int_\Omega |\nabla (u + c v)|^2 \dd x & = \int_\Omega |\nabla u|^2 \dd x + 2 \Real \int_\Omega c \nabla v \cdot \overline{\nabla u} \dd x + \int_\Omega |c \nabla v|^2 \dd x \\
 & \leq \lambda \int_\Omega |u|^2 \dd x + 2 \Real \int_\Omega c \nabla v \cdot \overline{\nabla u} \dd x + \lambda \int_\Omega |c v|^2 \dd x.
\end{split}
\end{align}
Moreover, Green's identity~\eqref{eq:Green} together with $u |_{\Gamma_{\rm D}} = 0$ and $\omega_0 \cdot \nu |_{\Gamma_{\rm N}} = 0$ yields
\begin{align}\label{eq:ende}
\begin{split}
 \int_\Omega \nabla v \cdot \overline{\nabla u} \dd x & = - \int_\Omega \Delta v \overline{u} \dd x + (\partial_\nu v |_{\partial \Omega}, u |_{\partial \Omega})_{\partial \Omega} \\
 & = \lambda \int_\Omega v \overline{u} \dd x + \int_{\partial \Omega} i v \overline{u} \omega_0 \cdot \nu \dd \sigma \\
 & = \lambda \int_\Omega v \overline{u} \dd x,
\end{split}
\end{align}
where $\sigma$ is the standard surface measure on $\partial \Omega$. Combining~\eqref{eq:anfang} and~\eqref{eq:ende} we arrive at
\begin{align}\label{eq:letzte}
 \int_\Omega |\nabla (u + c v)|^2 \dd x & \leq \lambda \int_\Omega |u + c v|^2 \dd x 
\end{align}
for all $u \in L$ and all $c \in \C$. Moreover, the function $v$ does not belong to $L$ as all functions in $L$ vanish on $\Gamma$. Hence $\dim (L + \spann \{v\}) = k + 1$ and~\eqref{eq:letzte} together with the min-max principle~\eqref{eq:minMaxNeumann} implies the assertion of the theorem.
\end{proof}

The following corollaries are direct consequences of Theorem~\ref{thm:NeumannMixed}. They illustrate the application of Theorem~\ref{thm:NeumannMixed} to domains with partially flat boundaries. 

\begin{cor}\label{cor:eins}
Let $\Omega$ be a bounded, connected Lipschitz domain in $\R^2$ and assume that $\Gamma_{\rm N}$ is contained in the union of parallel line segments. Then~\eqref{eq:Jawoll} holds for all $k\in\dN$.
\end{cor}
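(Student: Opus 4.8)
The plan is to check that the single hypothesis of Theorem~\ref{thm:NeumannMixed}, namely $\dim \cS(\Gamma_{\rm N}) \geq 1$, holds automatically once $\Gamma_{\rm N}$ is contained in a union of parallel line segments, and then to apply that theorem verbatim.

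First I would fix a nonzero direction vector $\tau \in \R^2$ common to all the parallel segments; since the segments are parallel, such a $\tau$ exists (and is unique up to a nonzero scalar). Next I would argue that $\tau$ is tangential at every relevant boundary point: for each $x' \in \hat\Gamma_{\rm N}$, i.e.\ each point of $\Gamma_{\rm N}$ at which the outer unit normal $\nu(x')$ exists, the point $x'$ lies on one of the segments, and near $x'$ the boundary coincides with a straight line in direction $\tau$. Hence $\nu(x')$ is orthogonal to $\tau$, so that $\tau \in T_{x'}$ by the definition~\eqref{eq:tangential} of the tangential hyperplane.

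Since this holds for every $x' \in \hat\Gamma_{\rm N}$, we obtain
\begin{align*}
 \tau \in \bigcap_{x' \in \hat\Gamma_{\rm N}} T_{x'} = \cS(\Gamma_{\rm N}),
\end{align*}
and therefore $\dim \cS(\Gamma_{\rm N}) \geq 1$. Theorem~\ref{thm:NeumannMixed} then immediately yields $\mu_{k+1} \leq \lambda_k^\Gamma$ for all $k \in \dN$, which is exactly~\eqref{eq:Jawoll}.

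I do not anticipate any real difficulty, as the corollary is a direct specialization of Theorem~\ref{thm:NeumannMixed}; the only minor point is to note that the finitely many endpoints of the segments, where $\nu$ may fail to exist, are automatically excluded from $\hat\Gamma_{\rm N}$ and hence play no role in the intersection defining $\cS(\Gamma_{\rm N})$.
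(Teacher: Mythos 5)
Your proposal is correct and follows exactly the route the paper intends: the corollary is stated there as a direct consequence of Theorem~\ref{thm:NeumannMixed}, obtained by observing that the common direction of the parallel segments lies in $T_{x'}$ for every $x'\in\hat\Gamma_{\rm N}$, so that $\dim\cS(\Gamma_{\rm N})\geq 1$. Your additional remark about the segment endpoints being excluded from $\hat\Gamma_{\rm N}$ is a harmless and accurate clarification.
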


\begin{cor}\label{cor:zwei}
Let $\Omega$ be a bounded, connected Lipschitz domain in $\R^3$ and assume that $\Sigma \subset \partial \Omega$ is the union of two plane parts and all plane parts of the boundary which are parallel to one of these two. If $\Gamma_{\rm N} \subset \Sigma$ then~\eqref{eq:Jawoll} holds for all $k\in\dN$.
\end{cor}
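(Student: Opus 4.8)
The plan is to derive this corollary directly from Theorem~\ref{thm:NeumannMixed}; the only thing requiring verification is that the geometric hypothesis on $\Sigma$ forces $\dim \cS(\Gamma_{\rm N}) \geq 1$, after which the inequality~\eqref{eq:Jawoll} is immediate. Thus the entire task reduces to exhibiting a single nonzero vector that is tangential to almost every point of $\Gamma_{\rm N}$.

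First I would fix unit normal vectors $\nu_1, \nu_2 \in \R^3$ to the two plane parts generating $\Sigma$. By assumption every plane part of the boundary contained in $\Sigma$ is parallel either to the first plane or to the second, so its almost-everywhere defined outer unit normal equals $\pm \nu_1$ or $\pm \nu_2$. The points of $\partial\Omega$ lying on the edges where distinct plane parts meet form a finite union of line segments and hence have surface measure zero. Consequently, for almost every $x' \in \Gamma_{\rm N} \subset \Sigma$ the normal $\nu(x')$ exists and belongs to $\{\pm\nu_1, \pm\nu_2\}$.

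With this description of the normals, any vector $\tau \in \R^3$ satisfying $\tau \cdot \nu_1 = 0$ and $\tau \cdot \nu_2 = 0$ is tangential to almost every $x' \in \Gamma_{\rm N}$ and therefore lies in $\cS(\Gamma_{\rm N}) = \bigcap_{x' \in \hat \Gamma_{\rm N}} T_{x'}$. The set of such $\tau$ is the solution space of at most two homogeneous linear equations in $\R^3$ and is hence at least one-dimensional: one may take $\tau = \nu_1 \times \nu_2$ when the two generating planes are non-parallel, and any nonzero vector orthogonal to $\nu_1$ when they are parallel. Thus $\dim\cS(\Gamma_{\rm N}) \geq 1$, and Theorem~\ref{thm:NeumannMixed} yields~\eqref{eq:Jawoll} for all $k \in \dN$.

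I do not expect a serious obstacle here: the core of the argument is the finite-dimensional observation that two homogeneous linear conditions in $\R^3$ always admit a nontrivial common solution, combined with the negligibility of the edge set. The only point requiring a little care is the claim that the normals along $\Gamma_{\rm N}$ take, up to a null set, at most the two values $\pm\nu_1, \pm\nu_2$; this rests on the hypothesis that $\Sigma$ is a union of plane parts exhibiting only these two normal directions, together with the elementary fact that intersections of distinct faces are lower-dimensional and thus carry no surface measure.
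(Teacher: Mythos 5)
Your proposal is correct and follows exactly the route the paper intends: the paper states this corollary as a direct consequence of Theorem~\ref{thm:NeumannMixed}, and your verification that the two normal directions $\pm\nu_1,\pm\nu_2$ impose at most two linear conditions on $\R^3$, so that $\dim\cS(\Gamma_{\rm N})\geq 1$, is precisely the omitted observation. Nothing further is needed.
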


The domains $\Omega_1$ and $\Omega_2$ in Figure~\ref{fig:Neumann} are examples to which the previous corollaries apply.

\begin{figure}[h]
\begin{tikzpicture}
\pgfsetlinewidth{0.8pt}
\color{gray}
\pgfputat{\pgfxy(2.3,-0.4)}{\pgfbox[center,base]{$\Gamma_{\rm N}$}}
\color{gray}
\pgfxyline(-0.7,-0.8)(-1.5,0)
\pgfxyline(1.6,0)(2.6,-1)
\color{black}
\pgfxyline(-1.5,0)(1.6,0)
\pgfxyline(2.6,-1)(1.3,-1.3)
\draw (1.3,-1.3) arc [radius=2, start angle=45, end angle= 107];
\pgfputat{\pgfxy(1.2,-0.6)}{\pgfbox[center,base]{$\Omega_1$}}
\end{tikzpicture} 
\qquad \qquad
\begin{tikzpicture}
\pgfsetlinewidth{0.8pt}
\fill[color=lightgray] (-0.5,-1) -- (0.5,-1) -- (0.5, -0.5) -- (-0.5,-0.5) -- (-0.5,-1);
\fill[color=lightgray] (-1,0) -- (1,0) -- (1, 0.5) -- (-1,0.5) -- (-1,0);
\fill[color=lightgray] (-0.5,-1) -- (-1,0) -- (-1,0.5) -- (-0.5,-0.5) -- (-0.5,-1);
\draw[densely dashed] (-1,0) -- (1,0);
\draw (1,0) -- (0.5,-1) -- (-0.5, -1) -- (-1,0);
\draw (-1,0.5) -- (1,0.5) -- (0.5,-0.5) -- (-0.5, -0.5) -- (-1,0.5);
\draw (-1,0) -- (-1,0.5);
\draw (1,0) -- (1,0.5);
\draw (-0.5,-1) -- (-0.5,-0.5);
\draw (0.5,-1) -- (0.5,-0.5);
\pgfputat{\pgfxy(0.2,-0.35)}{\pgfbox[center,base]{$\Omega_2$}}
\color{gray}
\pgfputat{\pgfxy(-1.3,-0.4)}{\pgfbox[center,base]{$\Gamma_{\rm N}$}}
\end{tikzpicture}
\qquad \qquad
\begin{tikzpicture}
\pgfsetlinewidth{0.8pt}
\fill[color=black] (-0.6,0) circle (0.6cm and 0.15cm);
\fill[color=black] (-0.6,1.2) circle (0.6cm and 0.15cm);
\fill[left color=lightgray!50!black,right color=lightgray!50!black,middle color=lightgray!50,shading=axis,opacity=0.25] (0,0) -- (0,1.2) arc (360:180:0.6cm and 0.15cm) -- (-1.2,0) arc (180:360:0.6cm and 0.15cm);
\draw (-1.2,1.2) -- (-1.2,0) arc (180:360:0.6cm and 0.15cm) -- (0,1.2) ++ (-0.6,0) circle (0.6cm and 0.15cm);
\draw[densely dashed] (-1.2,0) arc (180:0:0.6cm and 0.15cm);
\pgfputat{\pgfxy(-0.5,0.4)}{\pgfbox[center,base]{$\Omega_3$}}
\pgfputat{\pgfxy(0.7,0.5)}{\pgfbox[center,base]{$\Gamma_{\rm D}$}}
\pgfsetlinewidth{0.3pt}
\pgfxyline(0.4,0.5)(0.1,0.1)
\pgfxyline(0.4,0.8)(0.1,1.1)
\pgfsetlinewidth{0.8pt}
\color{gray}
\pgfputat{\pgfxy(-1.6,0.5)}{\pgfbox[center,base]{$\Gamma_{\rm N}$}}
\pgfsetlinewidth{0.8pt}
\color{gray}
\pgfxyline(-1.2,1.2)(-1.2,0)
\pgfxyline(0,1.2)(0,0)
\end{tikzpicture}
\caption{Three configurations for which the inequality~\eqref{eq:Jawoll} holds; cf.\ Corollary~\ref{cor:eins}--\ref{cor:zwei} and Example~\ref{ex:cylinder}. 
}
\label{fig:Neumann}
\end{figure}
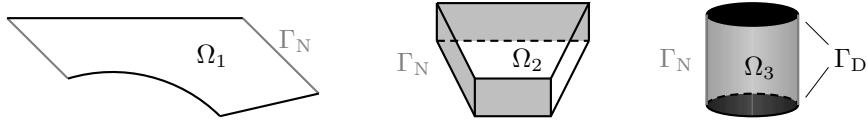

The next example shows that Theorem~\ref{thm:NeumannMixed} can also be applied to non-polyhedral three-dimensional domains.

\begin{example}\label{ex:cylinder}
Let $\Omega \subset \R^3$ be a cylinder with possibly deformed top and bottom faces. Moreover, assume that $\Gamma_{\rm N}$ is contained in the shell of $\Omega$. Then $\dim \cS (\Gamma_{\rm N}) = 1$ and Theorem~\ref{thm:NeumannMixed} implies~\eqref{eq:Jawoll} for all $k\in\dN$. For the simplest case of a non-deformed cylinder see the domain $\Omega_3$ in Figure~\ref{fig:Neumann}.
\end{example}

Theorem~\ref{thm:NeumannMixed} asserts that the inequality $\mu_{k + 1} \leq \lambda_k^\Gamma$ holds for all $k$ if $\Gamma_{\rm N}$ is not too large in a certain sense. The following example shows that the eigenvalue inequality is violated if $\Gamma_{\rm N}$ is too large.

\begin{example}
Consider the domain $\Omega := [0,\pi]^2 \subset \dR^2$ and set $\Gamma_{\rm D} := (0,\pi) \times \{0\}$, i.e., we impose a Dirichlet boundary condition on one side of the square $\Omega$ and Neumann boundary conditions on the rest of the boundary; in this case $\dim \cS (\Gamma_{\rm N}) = 0$. The Laplacian eigenfunctions and eigenvalues corresponding to the mixed and the pure Neumann problem on $\Omega$ can be calculated explicitly using separation of variables. For the mixed problem the eigenvalues are given by the numbers $(n - 1)^2 + (m - 1/2)^2$ with $n, m \in \N$, while the eigenvalues of the pure Neumann problem are $(n - 1)^2 + (m - 1)^2$ with $n, m \in \N$. In particular,
\begin{align*}
 \mu_2 = 1 > 1/4 = \lambda_1^\Gamma,
\end{align*}
so that the inequality~\eqref{eq:Jawoll} fails already for $k = 1$.
\end{example}

The following example shows that in general no strict inequality holds in the situation of Theorem~\ref{thm:NeumannMixed}.

\begin{example}\label{ex:square}
Let again $\Omega = [0, \pi]^2 \subset \R^2$ and let $\Gamma_{\rm D} = (0, \pi) \times \{0, \pi\}$ consist of two parallel faces. Then the Neumann eigenvalues are $(n - 1)^2 + (m - 1)^2$ and the mixed eigenvalues are $(n - 1)^2 + m^2$, $n, m \in \N$, yielding
\begin{align*}
 \mu_2 = 1 = \lambda_1^\Gamma.
\end{align*}
However, $\dim \cS (\Gamma_{\rm N}) = 1$, i.e., the assumptions of Theorem~\ref{thm:NeumannMixed} are satisfied.
\end{example}

Theorem~\ref{thm:NeumannMixed} can be combined with Proposition~\ref{prop:monotonicity} implying the following result. Roughly speaking, it states that the inequality~\eqref{eq:Jawoll} is strict if $\Gamma_{\rm N}$ can be enlarged nontrivially such that the condition on the dimension of the joint tangential space is not violated.

\begin{cor}\label{cor:strict}
Let $\Omega \subset \R^d$, $d \geq 2$, be a bounded, connected Lipschitz domain and let Hypothesis~\ref{hyp} be satisfied. Moreover, let $\Gamma' = \Gamma_{\rm D}'$ and $\Gamma_{\rm N}'$ be relatively open, nonempty subsets of $\partial \Omega$ such that $\Gamma_{\rm D}' \cap \Gamma_{\rm N}' = \emptyset$, $\partial \Omega \setminus (\Gamma_{\rm D}' \cup \Gamma_{\rm N}')$ has measure zero, and $\Gamma' \subset \Gamma$. If $\Gamma \setminus \Gamma'$ has a nonempty interior and $\dim \cS (\Gamma_{\rm N}') \geq 1$ then
\begin{align}\label{eq:nochmalStrict}
 \mu_{k+1} < \lambda_k^\Gamma
\end{align}
holds for all $k\in\dN$.
\end{cor}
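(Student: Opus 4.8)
The plan is to obtain the strict inequality by chaining together the (non-strict) main result of this section, Theorem~\ref{thm:NeumannMixed}, applied to the auxiliary configuration $(\Gamma_{\rm D}', \Gamma_{\rm N}')$, with the strict monotonicity principle of Proposition~\ref{prop:monotonicity}. The point is that passing from the pair $(\Gamma_{\rm D}', \Gamma_{\rm N}')$ to the pair $(\Gamma_{\rm D}, \Gamma_{\rm N})$ enlarges the Dirichlet part, and this enlargement is strict enough (nonempty interior) to gain a strict inequality, while the tangent-vector hypothesis is imposed on the \emph{smaller} Neumann part $\Gamma_{\rm N}'$ so that the non-strict estimate is available there.

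First I would note that the pair $(\Gamma_{\rm D}', \Gamma_{\rm N}')$ itself satisfies Hypothesis~\ref{hyp}: by assumption these sets are relatively open, nonempty, disjoint, and their union covers $\partial \Omega$ up to a set of measure zero. Since $\dim \cS(\Gamma_{\rm N}') \geq 1$ is assumed, Theorem~\ref{thm:NeumannMixed} applies verbatim to this configuration and yields
\begin{align*}
 \mu_{k+1} \leq \lambda_k^{\Gamma'} \quad \text{for all } k \in \N.
\end{align*}

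Next I would invoke Proposition~\ref{prop:monotonicity}. Both $\Gamma'$ and $\Gamma$ are nonempty and relatively open, we have the inclusion $\Gamma' \subset \Gamma$, and the difference $\Gamma \setminus \Gamma'$ has nonempty interior by hypothesis. Applying the proposition with its two sets taken to be $\Gamma'$ (the smaller Dirichlet part) and $\Gamma$ (the larger one) respectively, we obtain the strict inequality
\begin{align*}
 \lambda_k^{\Gamma'} < \lambda_k^{\Gamma} \quad \text{for all } k \in \N.
\end{align*}
Combining the two displays gives $\mu_{k+1} \leq \lambda_k^{\Gamma'} < \lambda_k^{\Gamma}$, which is~\eqref{eq:nochmalStrict}.

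I expect no serious obstacle here, as the argument is a direct juxtaposition of two already-established results; the only point requiring care is bookkeeping with the (somewhat counterintuitive) notation, namely that the primed objects correspond to the \emph{smaller} Dirichlet part, so that the monotonicity of Proposition~\ref{prop:monotonicity} must be read in the correct direction $\lambda_k^{\Gamma'} < \lambda_k^{\Gamma}$, and that it is the hypothesis $\dim \cS(\Gamma_{\rm N}') \geq 1$ on the larger Neumann part that licenses the use of Theorem~\ref{thm:NeumannMixed} for the primed configuration.
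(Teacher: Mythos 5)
Your proof is correct and is exactly the combination the paper intends: apply Theorem~\ref{thm:NeumannMixed} to the primed configuration to get $\mu_{k+1}\leq\lambda_k^{\Gamma'}$, then Proposition~\ref{prop:monotonicity} (with the roles of the proposition's sets read correctly, the smaller Dirichlet part being $\Gamma'$) to get $\lambda_k^{\Gamma'}<\lambda_k^{\Gamma}$. One cosmetic slip: in your opening paragraph you call $\Gamma_{\rm N}'$ the \emph{smaller} Neumann part, whereas (as you correctly state at the end) it is the \emph{larger} one, since $\Gamma'\subset\Gamma$ for the Dirichlet parts.
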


We provide an exemplary application of Corollary~\ref{cor:strict} in the next example.

\begin{example}
Let $\Omega \subset \R^3$ be a polyhedral domain whose boundary contains two parallel faces $\Gamma_1, \Gamma_2$. If we choose $\Gamma_{\rm N} = \Gamma_1$ and $\Gamma = \Gamma_{\rm D}$ contains all faces of $\Omega$ except $\Gamma_1$ then the assumptions of Corollary~\ref{cor:strict} are satisfied with $\Gamma_{\rm N}' = \Gamma_1 \cup \Gamma_2$. Hence~\eqref{eq:nochmalStrict} is satisfied.
\end{example}

\section{Inequalities for Dirichlet and mixed eigenvalues on polygonal and polyhedral domains}\label{sec:Dirichlet}

In this section we provide inequalities which compare the eigenvalues $\lambda_1^\Gamma < \lambda_2^\Gamma \leq \lambda_3^\Gamma \leq \dots$ of the operator $- \Delta_\Gamma$ subject to mixed boundary conditions with the eigenvalues $\lambda_1 < \lambda_2 \leq \lambda_3 \leq\dots$ of the Dirichlet Laplacian. Throughout this section we make an additional restriction on the class of domains. We assume that $\Omega \subset \R^d$, $d \geq 2$, is a polyhedral, convex, bounded domain; cf.\ Definition~\ref{def:poly}. Moreover, we assume that $\Gamma_{\rm D}$ and $\Gamma_{\rm N}$ are chosen according to Hypothesis~\ref{hyp}. For the main result of this section let $\hat \Gamma_{\rm D}$ denote the set of points $x' \in \Gamma_{\rm D}$ such that the tangential hyperplane $T_{x'}$ exists, see~\eqref{eq:tangential}, and define the linear subspace
\begin{align*}
 \cS (\Gamma_{\rm D}) = \bigcap_{x' \in \hat \Gamma_{\rm D}} T_{x'}
\end{align*}
of $\R^d$ consisting of all vectors being tangential to almost all points of $\Gamma_{\rm D}$. Note that $\dim \cS (\Gamma_{\rm D}) \in \{0, \dots, d - 1\}$. The main result of this section reads as follows; its proof relies heavily on Lemma~\ref{lem:det} in the appendix.

\begin{thm}\label{thm:DirichletMixed}
Let Hypothesis~\ref{hyp} be satisfied and assume, in addition, that $\Omega$ is polyhedral and convex. Then
\begin{align}\label{eq:mixedDirichlet}
 \lambda_{k + \dim \cS (\Gamma_{\rm D})}^\Gamma \leq \lambda_k
\end{align}
holds for all $k \in \N$.
\end{thm}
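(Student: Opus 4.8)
The plan is to construct, for each $k\in\N$, a subspace $M\subset H_{0,\Gamma}^1(\Omega)$ of dimension $k+\dim\cS(\Gamma_{\rm D})$ on which the Rayleigh quotient does not exceed $\lambda_k$; the inequality~\eqref{eq:mixedDirichlet} then follows at once from the min-max principle~\eqref{eq:minmax}. Write $l=\dim\cS(\Gamma_{\rm D})$ and fix real, $L^2(\Omega)$-orthonormal Dirichlet eigenfunctions $\phi_1,\dots,\phi_k$ associated with $\lambda_1\le\dots\le\lambda_k$. Since $\Omega$ is convex, the Dirichlet eigenfunctions belong to $H^2(\Omega)$, so their first-order derivatives lie in $H^1(\Omega)$. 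Choosing a basis $e_1,\dots,e_l$ of $\cS(\Gamma_{\rm D})$ and writing $\partial_e=e\cdot\nabla$, I would set
\[
 V=\spann\{\phi_1,\dots,\phi_k\},\qquad W=\spann\{\partial_{e_1}\phi_k,\dots,\partial_{e_l}\phi_k\},\qquad M=V+W.
\]
The essential feature of this choice, motivated by~\cite{LW86}, is that every generator of $W$ is a directional derivative of the \emph{same} eigenfunction $\phi_k$, so each satisfies $-\Delta w=\lambda_k w$ with the single eigenvalue $\lambda_k$.

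First I would record the properties of the test functions. For $e\in\cS(\Gamma_{\rm D})$ the vector $e$ is tangential to almost every point of $\Gamma_{\rm D}$; since $\phi_k$ vanishes on $\partial\Omega$, its gradient there is parallel to $\nu$, so $\partial_e\phi_k\restb=(e\cdot\nu)\,\partial_\nu\phi_k\restb$ and in particular $\partial_e\phi_k$ vanishes on $\Gamma_{\rm D}$. Hence $W\subset H_{0,\Gamma}^1(\Omega)$. Using Green's identity~\eqref{eq:Green} exactly as in the proof of Theorem~\ref{thm:NeumannMixed}, together with the fact that all functions in $V$ vanish on $\partial\Omega$ and $-\Delta w=\lambda_k w$, one obtains for $u\in V$ and $w\in W$
\[
 \int_\Omega\nabla u\cdot\overline{\nabla w}\dd x=\lambda_k\int_\Omega u\,\overline w\dd x,\qquad
 \int_\Omega|\nabla w|^2\dd x=\lambda_k\int_\Omega|w|^2\dd x+\int_{\Gamma_{\rm N}}(\partial_\nu w)\,\overline w\dd\sigma.
\]
Combining these with $\int_\Omega|\nabla u|^2\dd x\le\lambda_k\int_\Omega|u|^2\dd x$ (valid since the $\phi_j$ are orthonormal eigenfunctions with eigenvalues at most $\lambda_k$), the whole estimate $\int_\Omega|\nabla(u+w)|^2\dd x\le\lambda_k\int_\Omega|u+w|^2\dd x$ reduces to showing that the boundary functional $w\mapsto\int_{\Gamma_{\rm N}}(\partial_\nu w)\,\overline w\dd\sigma$ is nonpositive on $W$.

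The nonpositivity of this boundary term is the heart of the matter and the step I expect to be hardest. On the relative interior of each flat face $F\subset\Gamma_{\rm N}$ the eigenfunction $\phi_k$ is smooth up to $F$; using $\phi_k=0$ on $F$, the equation $-\Delta\phi_k=\lambda_k\phi_k$, and the constancy of $\nu$ on $F$, one checks that all second tangential derivatives and the second normal derivative of $\phi_k$ vanish on $F$, so that on $F$ the integrand becomes a tangential divergence and, with $e^\top$ the component of $e$ tangential to $F$,
\[
 \int_F(\partial_\nu w)\,w\dd\sigma=\tfrac12\,(e\cdot\nu)\int_{\partial F}(\partial_\nu\phi_k)^2\,(e^\top\cdot n_{\partial F})\dd\ell,
\]
where $n_{\partial F}$ is the outward normal to the edge $\partial F$ within the hyperplane of $F$. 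Summing over all faces contained in $\Gamma_{\rm N}$ collects these contributions along the edges of $\Omega$, and it is precisely here that the integral identity of Lemma~\ref{lem:det} is invoked: it assembles the edge sum intrinsically and, combined with the convexity of $\Omega$ (which forces every dihedral angle to be at most $\pi$) and the vanishing of $\phi_k$ on adjacent faces, yields that the total is $\le 0$. This bookkeeping of edge contributions, rather than any single estimate, is the main difficulty, and it is exactly the point at which a curved boundary would produce uncontrolled curvature terms and the argument would break down.

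Finally I would verify $\dim M=k+l$. Suppose $w+u=0$ with $w=\partial_e\phi_k$, $e=\sum_i c_i e_i\in\cS(\Gamma_{\rm D})$, and $u\in V$. Taking traces on $\Gamma_{\rm N}$, where $u$ and all $\phi_j$ vanish, gives $(e\cdot\nu)\,\partial_\nu\phi_k=0$ on $\Gamma_{\rm N}$. If $e\cdot\nu\neq0$ on some face $F\subset\Gamma_{\rm N}$, then $\phi_k$ and $\partial_\nu\phi_k$ both vanish on $F$, so Lemma~\ref{lem:UCP} forces $\phi_k\equiv0$, a contradiction; hence $e\cdot\nu=0$ a.e.\ on $\Gamma_{\rm N}$. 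Since also $e\cdot\nu=0$ a.e.\ on $\Gamma_{\rm D}$ by $e\in\cS(\Gamma_{\rm D})$, we get $e\cdot\nu=0$ a.e.\ on $\partial\Omega$; as $x\mapsto e\cdot x$ is harmonic with vanishing Neumann data, this yields $e=0$, whence $w=0$ and then $u=0$. Thus the generators of $M$ are linearly independent, $\dim M=k+l$, and since the Rayleigh quotient is bounded by $\lambda_k$ on $M$, the min-max principle~\eqref{eq:minmax} gives $\lambda_{k+l}^\Gamma\le\lambda_k$, which is~\eqref{eq:mixedDirichlet}.
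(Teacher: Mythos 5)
Your choice of test functions is exactly the paper's: linear combinations of the first $k$ Dirichlet eigenfunctions plus directional derivatives $\partial_e u_k$ with $e \in \cS(\Gamma_{\rm D})$, and your trace and linear-independence arguments are essentially those of the paper (the independence step should be stated for complex coefficients, i.e.\ for $\partial_{e'}u_k + i\,\partial_{e''}u_k$ with $e', e'' \in \cS(\Gamma_{\rm D})$, but that is cosmetic). The genuine gap is in the pivotal analytic step. You reduce everything to the claim that the boundary functional $w \mapsto \int_{\Gamma_{\rm N}} (\partial_\nu w)\,\overline{w}\, \dd\sigma$ is nonpositive on $W$, and then assert that summing your face-by-face Stokes computation over the faces and invoking Lemma~\ref{lem:det} together with convexity ``yields that the total is $\le 0$.'' This is not a proof: Lemma~\ref{lem:det} is a \emph{volume} integral identity and says nothing directly about assembling edge contributions, so the mechanism you describe does not exist as stated; you would have to carry out the edge bookkeeping yourself (which is, in substance, re-proving the content of the appendix). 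Moreover your face computation requires the trace of $(\partial_\nu u_k)^2$ on the edges $\partial F$, which is not available for a general convex polyhedron where $u_k \in H^2(\Omega)$ only; the paper handles precisely this difficulty by first working with $u \in C^\infty(\overline\Omega) \cap H_0^1(\Omega)$ and then approximating via Grisvard's density result. You do not address this.

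The cleaner route -- and the one the paper takes -- is to use Lemma~\ref{lem:det} as the volume identity it is: for $\Psi = \sum_j b_j \partial_j u_k$ with \emph{arbitrary} $b \in \C^d$ one gets the exact equality $\int_\Omega |\nabla \Psi|^2 \dd x = \lambda_k \int_\Omega |\Psi|^2 \dd x$ by rewriting $\int_\Omega (\partial_{mj}u_k)(\partial_{ml}u_k)\dd x$ as $\int_\Omega(\partial_{jl}u_k)(\partial_{mm}u_k)\dd x$ and comparing with $\lambda_k\int_\Omega \nabla u_k \cdot B\nabla u_k\,\dd x$ after one integration by parts. No boundary term over $\Gamma_{\rm N}$ ever appears, no sign argument from convexity of the dihedral angles is needed (convexity enters only through $H^2$-regularity and the validity of the identity), and the restriction of $b$ to $\cS(\Gamma_{\rm D})$ is used solely to guarantee membership in $H_{0,\Gamma}^1(\Omega)$. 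As written, your proposal identifies the right subspace but leaves the heart of the estimate unproven.
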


\begin{proof}
Let $k \in \N$ and let $u_j$ be real-valued Dirichlet Laplacian eigenfunctions corresponding to the eigenvalues $\lambda_j$, $j = 1, \dots, k$, being pairwise orthogonal in $L^2 (\Omega)$. For $a_1, \dots, a_k, b_1, \dots, b_d \in \C$ define
\begin{align}\label{eq:PhiPsi}
 \Phi = \sum_{j = 1}^k a_j u_j \in H^2 (\Omega) \cap H_0^1 (\Omega) \quad \text{and} \quad \Psi = \sum_{j = 1}^d b_j \partial_j u_k \in H^1 (\Omega).
\end{align}
Note that by Green's identity
\begin{align*}
 \int_\Omega \nabla u_j \cdot \nabla u_l \dd x & = - \int_\Omega \Delta u_j u_l \dd x = \lambda_j \int_\Omega u_j u_l \dd x = 0, \quad j, l \in \{1, \dots, k\},~j \neq l.
\end{align*}
Note further that $- \Delta \Psi = \lambda_k \Psi$ holds in the distributional sense. With these observations and $\Phi |_{\partial \Omega} = 0$ we get
\begin{align}\label{eq:erste}
\begin{split}
 \int_\Omega |\nabla (\Phi + \Psi)|^2 \dd x &  = \sum_{j = 1}^k \int_\Omega |a_j \nabla u_j|^2 \dd x + 2 \Real \int_\Omega \nabla \Psi \cdot \overline{\nabla \Phi} \dd x + \int_\Omega |\nabla \Psi|^2 \dd x \\
 & = \sum_{j = 1}^k \lambda_j \int_\Omega |a_j u_j|^2 \dd x + 2 \lambda_k \Real \int_\Omega \Psi \overline{\Phi} \dd x + \int_\Omega |\nabla \Psi|^2 \dd x.
\end{split}
\end{align}
Moreover, for the last integral with the help of Lemma~\ref{lem:det} we obtain
\begin{align}\label{eq:istJaToll}
\begin{split}
 \int_\Omega |\nabla \Psi|^2 \dd x & = \sum_{m = 1}^d \int_\Omega \bigg| \sum_{j = 1}^d b_j \partial_{m j} u_k \bigg|^2 \dd x \\
 & = \sum_{m = 1}^d \int_\Omega \bigg( \sum_{j = 1}^d |b_j \partial_{m j} u_k |^2 + 2 \Real \bigg( \sum_{j = 1}^d \sum_{l < j} b_l \overline{b_j} (\partial_{m l} u_k) (\partial_{m j} u_k) \bigg) \bigg) \dd x \\
 & = \sum_{m = 1}^d \int_\Omega \bigg( \sum_{j = 1}^d |b_j|^2 (\partial_{j j} u_k) (\partial_{m m} u_k) \\
 & \qquad \qquad + 2 \Real \bigg( \sum_{j = 1}^d \sum_{l < j} b_l \overline{b_j} (\partial_{l j} u_k) (\partial_{m m} u_k) \bigg) \bigg) \dd x \\
 & = \sum_{m = 1}^d \int_\Omega \sum_{l, j = 1}^d b_l \overline{b_j} (\partial_{l j} u_k) (\partial_{m m} u_k) \dd x.
\end{split}
\end{align}
On the other hand, defining the $d \times d$-matrix
\begin{align*}
 B = \big(b_l \overline{b_j} \big)_{l, j = 1}^d
\end{align*}
and using integration by parts we get
\begin{align}\label{eq:super}
\begin{split}
 \lambda_k \int_\Omega |\Psi|^2 \dd x & = \lambda_k \int_\Omega \sum_{l, j = 1}^d b_l \overline{b_j} (\partial_{l} u_k) (\partial_j u_k) \dd x \\
 & = \lambda_k \int_\Omega \nabla u_k \cdot B \nabla u_k \dd x \\
 & = \int_\Omega (\Delta u_k) \diver (B \nabla u_k) \dd x \\
 & = \sum_{m = 1}^d \int_\Omega (\partial_{m m} u_k) \sum_{l, j = 1}^d b_l \overline{b_j} \partial_{l j} u_k \dd x.
\end{split}
\end{align}
Combining~\eqref{eq:istJaToll} and~\eqref{eq:super} and plugging the result into~\eqref{eq:erste} yields
\begin{align}\label{eq:dritteNeu}
\begin{split}
 \int_\Omega |\nabla (\Phi + \Psi)|^2 \dd x & = \sum_{j = 1}^k \lambda_j \int_\Omega |a_j u_j|^2 \dd x + 2 \lambda_k \Real \int_\Omega \Psi \overline \Phi \dd x + \lambda_k \int_\Omega |\Psi|^2 \dd x \\
 & \leq \lambda_k \int_\Omega |\Phi + \Psi|^2 \dd x.
\end{split}
\end{align}

In order to apply the min-max principle~\eqref{eq:minmax}, our aim is to estimate the dimension of the linear space consisting of functions of the form $\Phi + \Psi$ as in~\eqref{eq:PhiPsi} which additionally belong to $H_{0, \Gamma}^1 (\Omega)$. For this note first that 
\begin{align}\label{eq:siehMalAn}
 \dim \spann \left\{u_1, \dots, u_k, \partial_1 u_k, \dots, \partial_d u_k \right\} = k + \dim \spann \{\partial_1 u_k, \dots, \partial_d u_k\}.
\end{align}
In fact, by assumption we have $\dim \spann \{u_1, \dots, u_k\} = k$. Moreover, let
\begin{align*}
 w \in \spann \left\{u_1, \dots, u_k \right\} \cap \spann \left\{ \partial_1 u_k, \dots, \partial_d u_k \right\}.
\end{align*}
Then $w \in H_0^1 (\Omega)$ and $w = \sum_{j = 1}^d b_j \partial_j u_k$ for certain $b_1, \dots, b_d \in \C$. For a contradiction assume first that the vector $(\Real b_1, \dots, \Real b_d)^\top$ is nontrivial. Let $\Lambda$ be a face of $\partial\Omega$ such that the vector $(\Real b_1, \dots, \Real b_d)^\top$ is not tangential to $\Lambda$ and let $\tau^1, \dots, \tau^{d - 1}$ be linearly independent tangential vectors of $\Lambda$. Then the system $\{\tau^1, \dots, \tau^{d - 1}, (\Real b_1, \dots, \Real b_d)^\top\}$ is linearly independent, and due to $u_k |_\Lambda = 0$ we have
\begin{align}\label{eq:bisschenNull}
 \tau^j \cdot \nabla u_k |_\Lambda = 0, \quad j = 1, \dots, d - 1.
\end{align}
Moreover, 
\begin{align}\label{eq:nochmehrNull}
 (\Real b_1, \dots, \Real b_d)^\top \cdot \nabla u_k |_\Lambda = (\Real w) |_\Lambda = 0.
\end{align}
From~\eqref{eq:bisschenNull} and~\eqref{eq:nochmehrNull} it follows 
\begin{align*}
 \partial_\nu u_k |_\Lambda = \nu \cdot \nabla u_k |_\Lambda = 0
\end{align*}
as the constant outer unit normal $\nu$ on $\Lambda$ can be written as a linear combination of $\tau^1, \dots, \tau^{d - 1}$ and $(\Real b_1, \dots, \Real b_d)^\top$. Together with $u_k |_\Lambda = 0$, by Lemma~\ref{lem:UCP} this implies $u_k = 0$, a contradiction; thus $\Real b_1 = \dots = \Real b_d = 0$. Analogously we obtain $\Imag b_1 = \dots = \Imag b_d = 0$ and thus $w = 0$. From this we conclude~\eqref{eq:siehMalAn}. 

Let us now derive from~\eqref{eq:dritteNeu} and~\eqref{eq:siehMalAn} the assertion of the theorem. In fact, the linear space $\cS (\Gamma_{\rm D})$ is tangential to all of $\Gamma_{\rm D}$ and $u_k$ vanishes on $\Gamma_{\rm D}$. Thus 
\begin{align*}
 \sum_{j = 1}^d b_j \partial_j u_k |_{\Gamma_{\rm D}} = (b_1, \dots, b_d)^\top \cdot \nabla u_k |_{\Gamma_{\rm D}} = 0
\end{align*}
holds for all $(b_1, \dots, b_d)^\top \in \cS (\Gamma_{\rm D})$, that is, 
\begin{align}\label{eq:schonFast}
 \sum_{j = 1}^d b_j \partial_j u_k \in H_{0, \Gamma}^1 (\Omega) \quad \text{for all}~(b_1, \dots, b_d)^\top \in \cS (\Gamma_{\rm D}).
\end{align}
Next, note that $\partial_1 u_k, \dots, \partial_d u_k$ are linearly independent. For this let $b_1, \dots, b_d \in \C$ be such that 
\begin{align*}
 \sum_{j = 1}^d b_j \partial_j u_k = 0
\end{align*}
in $\Omega$ and assume for contradiction that we are off the case $b_1 = \dots = b_d = 0$. Then without loss of generality the vector $(\Real b_1, \dots, \Real b_d)^\top$ is nontrivial and the derivative of $u_k$ in the direction of this vector vanishes on all of $\Omega$. From this and $u_k |_{\partial \Omega} = 0$ it follows $u_k = 0$ on $\Omega$, a contradiction. In particular, linearly independent vectors $(b_1, \dots, b_d)^\top \in \cS (\Gamma_{\rm D})$ lead to linearly independent functions $\sum_{j = 1}^d b_j \partial_j u_k \in H_{0, \Gamma}^1 (\Omega)$, see~\eqref{eq:schonFast}. Hence
\begin{align*}
 \dim \left( \spann \{\partial_1 u_k, \dots, \partial_d u_k\} \cap H_{0, \Gamma}^1 (\Omega) \right) \geq \dim \cS (\Gamma_{\rm D}).
\end{align*}
From this and~\eqref{eq:siehMalAn} we conclude
\begin{align*}
 \dim \left( \spann \{u_1, \dots, u_k, \partial_1 u_k, \dots, \partial_d u_k \} \cap H_{0, \Gamma}^1 (\Omega) \right) \geq k + \dim \cS (\Gamma_{\rm D}).
\end{align*}
Hence~\eqref{eq:dritteNeu} together with the definition of $\Phi$ and $\Psi$ in~\eqref{eq:PhiPsi} yields
\begin{align*}
 \int_\Omega |\nabla u|^2 \dd x \leq \lambda_k \int_\Omega |u|^2 \dd x
\end{align*}
for all $u$ in a subspace of $H_{0, \Gamma}^1 (\Omega)$ of dimension $k + \dim \cS (\Gamma_{\rm D})$ or larger. This leads to the assertion of the theorem.
\end{proof}

We collect several immediate consequences of Theorem~\ref{thm:DirichletMixed}; cf.\ Figure~\ref{fig:Dirichlet}. First we consider the case of polyhedral domains and a Dirichlet boundary condition on only one face of $\Omega$. The second assertion of the following corollary makes use of Proposition~\ref{prop:monotonicity} additionally.

\begin{cor}\label{cor:DirichletMixedOneFace}
Let $\Omega \subset \dR^d$, $d \geq 2$, be a polyhedral, convex, bounded domain and let $\Gamma = \Gamma_{\rm D} \subset \Sigma$, 
where $\Sigma \subset \partial \Omega$ is either one face of $\partial \Omega$ or the union of two parallel faces. Then
\begin{align*}
 \lambda_{k + d - 1}^\Gamma  \leq \lambda_k
\end{align*}
holds for all $k \in \N$. If, in addition, $\Sigma \setminus \Gamma_{\rm D}$ has a nonempty interior then
\begin{align*}
 \lambda_{k + d - 1}^\Gamma < \lambda_k
\end{align*}
holds for all $k \in \N$.
\end{cor}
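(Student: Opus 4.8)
The plan is to deduce both inequalities directly from Theorem~\ref{thm:DirichletMixed}, the only real task being the computation of $\dim \cS(\Gamma_{\rm D})$. First I would use that $\Omega$ is polyhedral: on each face $\Lambda$ of $\partial\Omega$ the outer unit normal $\nu$ is constant, so by~\eqref{eq:tangential} the tangential hyperplane $T_{x'}$ coincides with the fixed $(d-1)$-dimensional hyperplane $\nu^\perp$ for almost every $x' \in \Lambda$. If $\Sigma$ is a single face, then every point of $\Gamma_{\rm D} \subset \Sigma$ shares this normal, whence $\cS(\Gamma_{\rm D}) = \nu^\perp$ and $\dim \cS(\Gamma_{\rm D}) = d-1$. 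If $\Sigma$ is the union of two parallel faces, the two faces have the same normal direction (their normals differ only in sign, which is irrelevant for the tangential hyperplane), so the hyperplanes $T_{x'}$ again coincide at almost all $x' \in \Gamma_{\rm D}$ and $\dim \cS(\Gamma_{\rm D}) = d-1$ as well. In either case Theorem~\ref{thm:DirichletMixed} then yields $\lambda_{k+d-1}^\Gamma \leq \lambda_k$ for all $k \in \N$.

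To obtain the strict inequality under the extra assumption that $\Sigma \setminus \Gamma_{\rm D}$ has nonempty interior, I would interpose an enlarged Dirichlet part. Concretely, I would choose a relatively open set $\Gamma'$ with $\Gamma_{\rm D} \subset \Gamma' \subset \Sigma$ such that $\Gamma' \setminus \Gamma_{\rm D}$ has nonempty interior; this is possible precisely because $\Sigma \setminus \Gamma_{\rm D}$ has nonempty interior (for instance, adjoin to $\Gamma_{\rm D}$ a small relatively open subset of the interior of $\Sigma \setminus \overline{\Gamma_{\rm D}}$). Since $\Gamma' \subset \Sigma$, the dimension count above still gives $\dim \cS(\Gamma') = d-1$; moreover, as $\Omega$ has strictly more faces than those comprising $\Sigma$, the associated Neumann part $\partial\Omega \setminus \overline{\Gamma'}$ is nonempty, so Hypothesis~\ref{hyp} is satisfied for $\Gamma'$. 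Applying Theorem~\ref{thm:DirichletMixed} to $\Gamma'$ then gives $\lambda_{k+d-1}^{\Gamma'} \leq \lambda_k$ for all $k$.

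Finally I would invoke the strict monotonicity of Proposition~\ref{prop:monotonicity}. The pair $\Gamma_{\rm D} \subset \Gamma'$ satisfies its hypotheses because $\Gamma' \setminus \Gamma_{\rm D}$ has nonempty interior, so $\lambda_j^{\Gamma_{\rm D}} < \lambda_j^{\Gamma'}$ for every $j \in \N$. Taking $j = k + d - 1$ and chaining with the previous estimate yields $\lambda_{k+d-1}^{\Gamma} < \lambda_{k+d-1}^{\Gamma'} \leq \lambda_k$ for all $k$, which is the desired strict inequality.

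The bulk of the argument is a routine dimension count, and I do not expect a genuine obstacle. The only point requiring a little care is the parallel-faces case: one must note that parallel faces share a common normal direction, so that their tangential hyperplanes genuinely coincide and the intersection defining $\cS(\Gamma_{\rm D})$ does not drop below dimension $d-1$. Beyond that, the work is merely bookkeeping to verify that Hypothesis~\ref{hyp} and the hypotheses of Proposition~\ref{prop:monotonicity} hold for the enlarged Dirichlet part $\Gamma'$.
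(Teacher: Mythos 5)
Your argument is correct and follows exactly the route the paper intends: the corollary is stated as an immediate consequence of Theorem~\ref{thm:DirichletMixed} via the dimension count $\dim \cS(\Gamma_{\rm D}) = d-1$ for a face or a pair of parallel faces, with the strict inequality obtained by enlarging the Dirichlet part inside $\Sigma$ and invoking Proposition~\ref{prop:monotonicity}. No gaps.
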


Theorem~\ref{thm:DirichletMixed} has also nontrivial implications if $\Gamma_{\rm D}$ is larger than only one face (or a pair of parallel faces). This is illustrated in the three-dimensional case in the following corollary.

\begin{cor}\label{cor:DirichletMixed3D}
Let $\Omega \subset \R^3$ be a polyhedral, convex, bounded domain. If $\Sigma_j$ is a part of $\partial \Omega$ consisting of parallel faces, $j = 1, 2$, and $\Gamma = \Gamma_{\rm D} \subset \Sigma_1 \cup \Sigma_2$ then
 \begin{align*}
 \lambda_{k + 1}^\Gamma \leq \lambda_k
\end{align*}
holds for all $k \in \N$.
\end{cor}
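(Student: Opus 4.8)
The plan is to derive this directly from Theorem~\ref{thm:DirichletMixed}. That theorem yields $\lambda_{k + \dim \cS (\Gamma_{\rm D})}^\Gamma \leq \lambda_k$ for all $k \in \N$, and since the mixed eigenvalues are nondecreasing it therefore suffices to prove that $\dim \cS (\Gamma_{\rm D}) \geq 1$: the chain $\lambda_{k+1}^\Gamma \leq \lambda_{k + \dim \cS (\Gamma_{\rm D})}^\Gamma \leq \lambda_k$ then follows at once. So the entire task reduces to a lower bound on the dimension of the joint tangential space of $\Gamma_{\rm D}$.

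To compute $\cS (\Gamma_{\rm D})$, I would first observe that since each $\Sigma_j$ consists of mutually parallel faces, all of these faces share one and the same unit normal direction, say $\nu_j$, up to sign. Consequently, for almost every $x' \in \Sigma_j$ the tangential hyperplane satisfies $T_{x'} = \nu_j^\perp$, the two-dimensional orthogonal complement of $\nu_j$ in $\R^3$. Because $\Gamma_{\rm D} \subset \Sigma_1 \cup \Sigma_2$, every point $x' \in \hat\Gamma_{\rm D}$ has tangential hyperplane equal to either $\nu_1^\perp$ or $\nu_2^\perp$, and hence the intersection $\cS (\Gamma_{\rm D}) = \bigcap_{x' \in \hat\Gamma_{\rm D}} T_{x'}$ contains $\nu_1^\perp \cap \nu_2^\perp$.

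It then remains to count dimensions. Since $\nu_1^\perp$ and $\nu_2^\perp$ are both two-dimensional subspaces of the three-dimensional space $\R^3$, one has $\dim (\nu_1^\perp \cap \nu_2^\perp) \geq \dim \nu_1^\perp + \dim \nu_2^\perp - 3 = 1$, irrespective of whether the two families $\Sigma_1, \Sigma_2$ happen to be parallel to each other. Therefore $\dim \cS (\Gamma_{\rm D}) \geq \dim(\nu_1^\perp \cap \nu_2^\perp) \geq 1$, which completes the argument. The proof is essentially immediate once $\cS(\Gamma_{\rm D})$ is identified, and the only point requiring genuine care -- the nearest thing to an obstacle -- is the bookkeeping observation that the parallel structure of each $\Sigma_j$ collapses all of its tangential hyperplanes to the single plane $\nu_j^\perp$, so that the intersection over the (possibly infinitely many) points of $\hat\Gamma_{\rm D}$ reduces to the intersection of just two planes in $\R^3$.
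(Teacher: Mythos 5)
Your proposal is correct and matches the paper's intent exactly: the corollary is stated there as an immediate consequence of Theorem~\ref{thm:DirichletMixed}, obtained precisely by noting that the tangential hyperplanes over $\Sigma_1\cup\Sigma_2$ reduce to the two planes $\nu_1^\perp,\nu_2^\perp\subset\R^3$, whose intersection has dimension at least $1$, so $\dim\cS(\Gamma_{\rm D})\geq 1$. Nothing further is needed.
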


\begin{figure}[h]
\begin{tikzpicture}
\pgfsetlinewidth{0.8pt}
\fill[color=black] (-0.5,-1) -- (-1,0) -- (-1,0.5) -- (-0.5,-0.5) -- (-0.5,-1);
\draw[densely dashed] (-1,0) -- (1,0);
\draw (1,0) -- (0.5,-1) -- (-0.5, -1) -- (-1,0);
\draw (-1,0.5) -- (1,0.5) -- (0.5,-0.5) -- (-0.5, -0.5) -- (-1,0.5);
\draw (-1,0) -- (-1,0.5);
\draw (1,0) -- (1,0.5);
\draw (-0.5,-1) -- (-0.5,-0.5);
\draw (0.5,-1) -- (0.5,-0.5);
\pgfputat{\pgfxy(0.2,-0.35)}{\pgfbox[center,base]{$\Omega_1$}}
\pgfputat{\pgfxy(-1.3,-0.4)}{\pgfbox[center,base]{$\Gamma_{\rm D}$}}
\end{tikzpicture}
\qquad \qquad
\begin{tikzpicture}
\pgfsetlinewidth{0.8pt}
\fill[color=black] (-0.5,-1) -- (-1,0) -- (-1,0.5) -- (-0.5,-0.5) -- (-0.5,-1);
\fill[color=black] (0.5,-1) -- (1,0) -- (1,0.5) -- (0.5,-0.5) -- (0.5,-1);
\draw[densely dashed] (-1,0) -- (1,0);
\draw (1,0) -- (0.5,-1) -- (-0.5, -1) -- (-1,0);
\draw (-1,0.5) -- (1,0.5) -- (0.5,-0.5) -- (-0.5, -0.5) -- (-1,0.5);
\draw (-1,0) -- (-1,0.5);
\draw (1,0) -- (1,0.5);
\draw (-0.5,-1) -- (-0.5,-0.5);
\draw (0.5,-1) -- (0.5,-0.5);
\pgfputat{\pgfxy(0.2,-0.35)}{\pgfbox[center,base]{$\Omega_2$}}
\pgfputat{\pgfxy(-1.3,-0.4)}{\pgfbox[center,base]{$\Gamma_{\rm D}$}}
\end{tikzpicture}
\caption{For the example of $\Omega_1$ the inequality $\lambda_{k + 2}^\Gamma \leq \lambda_k$ holds for all $k \in \N$, see Corollary~\ref{cor:DirichletMixedOneFace}. For $\Omega_2$ one has $\lambda_{k + 1}^\Gamma \leq \lambda_k$ for all $k \in \N$, see Corollary~\ref{cor:DirichletMixed3D}.}
\label{fig:Dirichlet}
\end{figure}
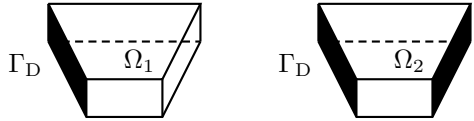

The following example demonstrates that, in general, the number $\dim \cS (\Gamma_{\rm D})$ in the eigenvalue inequality~\eqref{eq:mixedDirichlet} cannot be increased and the inequality~\eqref{eq:mixedDirichlet} is not strict. 

\begin{example}\label{ex:cube}
We consider the cube $\Omega := [0,\pi]^3 \subset \dR^3$ and suppose that $\Gamma_{\rm N} := [0,\pi]^2 \times \{0,\pi\}$, i.e., a Neumann boundary condition is imposed on two opposite faces of $\Omega$ and Dirichlet boundary conditions prevail on the rest of the boundary. In this case obviously $\dim \cS (\Gamma_{\rm D}) = 1$ and Theorem~\ref{thm:DirichletMixed} yields $\lambda_{k + 1}^\Gamma \leq \lambda_k$ for all $k \in \N$. Indeed an inequality of the form $\lambda_{k + 2}^\Gamma \leq \lambda_k$ does not hold for all $k \in \N$. In fact, the eigenvalues of the mixed problem can be calculated via separation of variables. They are given by the numbers $(n-1)^2 + m^2 + l^2$ with $n, m, l \in \dN$. On the other hand the Dirichlet Laplacian eigenvalues can be calculated analogously and have the form $n^2 + m^2 + l^2$ with $n, m, l \in \N$.  Thus
\begin{align*}
 \lambda_3^\Gamma = 5 > 3 = \lambda_1.
\end{align*}
Moreover, in this example the eigenvalue inequality $\lambda_{k + 1}^\Gamma \leq \lambda_k$ for all $k \in \N$ obtained from Theorem~\ref{thm:DirichletMixed} is not strict since we see
\begin{align*}
 \lambda_2^\Gamma = 3 = \lambda_1.
\end{align*}
\end{example}

\begin{remark}
The reasoning in the proof of Theorem~\ref{thm:DirichletMixed} can be used directly to derive the inequality
\begin{align}\label{eq:LW}
 \mu_{k + d} \leq \lambda_k \quad \text{for~all}~k \in \N
\end{align}
on any polyhedral, convex domain. Levine and Weinberger~\cite{LW86} proved this for every smooth, convex domain and extended their result to arbitrary convex domains by an approximation step. For the polyhedral case the method of the present paper is more direct.
\end{remark}

\appendix

\section{An auxiliary identity for polygonal and polyhedral domains}

The following lemma is crucial for the proof of Theorem~\ref{thm:DirichletMixed}. In the two-dimensional case it can be found in Grisvard's monograph~\cite{G85}. Below we provide a proof for the three-dimensional case and extend it afterwards to arbitrary space dimensions by reduction. We remark that the assertion of the lemma fails for general bounded, convex domains, as simple examples demonstrate.

\begin{lem}\label{lem:det}
Let $\Omega \subset \R^d$, $d \geq 2$, be a polyhedral, convex domain and let $u \in H^2 (\Omega) \cap H_0^1 (\Omega)$. Then 
\begin{align*}
 \int_\Omega (\partial_{k m} u) (\partial_{k j} u) \dd x = \int_\Omega (\partial_{m j} u) (\partial_{k k} u) \dd x
\end{align*}
holds for all $j, k, m \in \{1, \dots, d\}$.
\end{lem}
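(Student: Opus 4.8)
The plan is to first reduce to smooth functions and then to convert the claimed bulk identity into a purely boundary statement that can be checked face by face. Both sides of the asserted identity are continuous bilinear forms in $u$ with respect to the $H^2(\Omega)$-norm, so it suffices to establish it on a dense subclass. On a bounded convex domain one has $H^2(\Omega)\cap H_0^1(\Omega) = \{u \in H^2(\Omega): u|_{\partial\Omega} = 0\}$, and smooth functions vanishing on $\partial\Omega$ are dense therein; I would therefore assume $u \in C^\infty(\overline\Omega)$ with $u|_{\partial\Omega} = 0$, so that all integrations by parts below are classical and no third-order term causes trouble. Fix $j,k,m \in \{1,\dots,d\}$ and abbreviate $I = \int_\Omega (\partial_{km}u)(\partial_{kj}u)\dd x$ and $J = \int_\Omega (\partial_{mj}u)(\partial_{kk}u)\dd x$; the goal is $I = J$.

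Next I would integrate by parts twice. Writing $\partial_{km}u = \partial_m(\partial_k u)$ and $\partial_{mj}u = \partial_m(\partial_j u)$ and integrating in the $m$-th variable gives
\begin{align*}
 I - J = B_1 - \int_\Omega \big[(\partial_k u)(\partial_{mkj}u) - (\partial_j u)(\partial_{mkk}u)\big]\dd x, \quad B_1 = \int_{\partial\Omega}\big[(\partial_k u)(\partial_{kj}u) - (\partial_j u)(\partial_{kk}u)\big]\nu_m \dd\sigma.
\end{align*}
A further integration by parts, this time in the $k$-th variable, rewrites the remaining bulk integral as $B_2 + (I-J)$: the third-order terms cancel, the leftover second-order bulk integral is exactly $\int_\Omega\big[(\partial_{kj}u)(\partial_{mk}u)-(\partial_{kk}u)(\partial_{mj}u)\big]\dd x = I-J$, and the new boundary term is $B_2 = \int_{\partial\Omega}\big[(\partial_k u)(\partial_{mj}u) - (\partial_j u)(\partial_{mk}u)\big]\nu_k \dd\sigma$. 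Substituting yields the closed relation $2(I-J) = B_1 - B_2$, so the whole identity is reduced to showing $B_1 = B_2$.

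Finally I would evaluate $B_1$ and $B_2$ face by face, which is where the flat (polyhedral) structure enters decisively. On each open face $\Lambda$ the unit normal $\nu$ is constant, and $u|_\Lambda = 0$ forces all tangential derivatives of $u$ to vanish, so $\nabla u = (\partial_\nu u)\nu$ on $\Lambda$. Differentiating this relation tangentially shows that the Hessian of $u$ on $\Lambda$ has the special form $\partial_{ab}u = \alpha\,\nu_a\nu_b + \nu_a p_b + p_a\nu_b$, where $\alpha = \partial_{\nu\nu}u$ and $p$ is the tangential gradient of $\partial_\nu u$ along $\Lambda$ (in particular $p\cdot\nu = 0$). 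Substituting $\partial_a u = (\partial_\nu u)\nu_a$ and this Hessian into the two integrands, a short computation collapses both of them to the common pointwise expression $(\partial_\nu u)\,\nu_m\big(\nu_k^2\, p_j - \nu_j\nu_k\, p_k\big)$ on $\Lambda$. Hence the integrands of $B_1$ and $B_2$ agree on every face, so $B_1 = B_2$ and therefore $I = J$.

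The main obstacle I anticipate is the reduction to smooth functions: the density of $C^\infty(\overline\Omega)\cap H_0^1(\Omega)$ in $H^2(\Omega)\cap H_0^1(\Omega)$ on a convex polyhedron is a genuine regularity fact (edges and vertices are precisely where $H^2$-regularity of the Dirichlet problem is delicate), and it is what legitimises the classical integrations by parts with the correct boundary terms. A clean way to organise this — and presumably the route taken here — is a dimensional induction that avoids the hardest high-dimensional regularity: if some coordinate direction $e_i$ does not occur among $\{j,k,m\}$, one slices $\Omega$ by the hyperplanes $\{x_i = t\}$, each slice being a convex polyhedral domain of dimension $d-1$, applies the lower-dimensional identity on almost every slice, and integrates in $t$ by Fubini. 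This leaves only the cases in which $\{j,k,m\}$ exhausts all available coordinates, namely the base cases $d = 2$ (Grisvard) and $d = 3$ with $j,k,m$ pairwise distinct, to which the face computation above is applied directly.
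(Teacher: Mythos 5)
Your proof is correct, but the key boundary argument is genuinely different from the paper's. Both proofs share the same skeleton: reduce to $u \in C^\infty(\overline\Omega)$ vanishing on $\partial\Omega$ via the density/regularity result for convex polyhedra (the paper cites \cite[Corollaire~3.2]{G75} for exactly this approximation step), integrate by parts to push the discrepancy to the boundary, and exploit the flatness of the faces. Where you diverge is in how the boundary contribution is killed. The paper, working in $d=3$, reduces the claim to $\int_{\partial\Omega} v(\partial_2 w\,\nu_1 - \partial_1 w\,\nu_2)\dd\sigma = 0$ with $v,w$ first derivatives of $u$; on each face it uses two tangential directions to express $w$ as a multiple of $v$, rewrites the face integral as the flux of a curl, applies Stokes' theorem, and cancels the resulting line integrals over the shared edges of adjacent faces. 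That argument is intrinsically three-dimensional, which is why the paper then handles $d>3$ by the slicing/Fubini reduction you mention at the end. Your version instead produces the symmetric relation $2(I-J) = B_1 - B_2$ by two integrations by parts and verifies that the two boundary integrands agree \emph{pointwise} on every open face, using $\nabla u = (\partial_\nu u)\nu$ and the induced structure $\partial_{ab}u = \alpha\nu_a\nu_b + \nu_a p_b + p_a\nu_b$ of the Hessian on a flat face; I checked the algebra and both integrands indeed collapse to $(\partial_\nu u)\,\nu_m(\nu_k^2 p_j - \nu_j\nu_k p_k)$. This buys a proof that is uniform in all dimensions $d\geq 2$, needs neither Stokes' theorem nor any cancellation across edges, and makes transparent that constancy of $\nu$ on each face is the only geometric input beyond the density step (consistent with the paper's remark that the identity fails for curved convex domains, where your face computation would pick up curvature terms); the slicing reduction then becomes optional rather than necessary. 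The one ingredient you rightly flag but do not prove --- density of $C^\infty(\overline\Omega)\cap H_0^1(\Omega)$ in $H^2(\Omega)\cap H_0^1(\Omega)$ for a convex polyhedron --- is exactly the point where the paper, too, defers to Grisvard, so this is an acceptable citation rather than a gap.
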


\begin{proof}
{\bf 1.} For $d = 2$ this is a special case of~\cite[Lemma~4.3.1.1--Lemma~4.3.1.3]{G85}.

{\bf 2.} Let $d = 3$. We are going to show that for $v, w \in \{\partial_1 u, \partial_2 u, \partial_3 u\}$ we have
\begin{align}\label{eq:fastSoAehnlich}
 \int_\Omega \partial_j v \partial_k w - \partial_k v \partial_j w \dd x = 0
\end{align}
for all $j, k \in \{1, 2, 3\}$. This yields the claim. We prove~\eqref{eq:fastSoAehnlich} first for $u \in C^\infty (\overline \Omega) \cap H_0^1 (\Omega)$. Note that we can assume $j \neq k$ and $v \neq w$ since otherwise the claim is satisfied trivially. Without loss of generality we assume $j = 1$ and $k = 2$. Moreover, we write $v = \partial_l u$ and $w = \partial_m u$ with $l \neq m$ and take the unique $M \in \{1, 2, 3\} \setminus \{m, l\}$. 
Note that integration by parts leads to
\begin{align*}
 \int_\Omega \partial_1 v \partial_2 w \dd x & = \int_\Omega \diver (v e_1) \partial_2 w \dd x \\
 & = \int_{\partial \Omega} \partial_2 w v e_1 \cdot \nu \dd \sigma - \int_\Omega v e_1 \cdot \nabla \partial_2 w \dd x \\
 & = \int_{\partial \Omega} \partial_2 w v \nu_1 \dd \sigma - \int_\Omega v \diver(\partial_1 w \ee_2) \dd x \\
 & = \int_{\partial \Omega} \partial_2 w v \nu_1 \dd \sigma - \bigg( \int_{\partial \Omega} v \partial_1 w e_2 \cdot \nu \dd \sigma - \int_\Omega \partial_1 w e_2 \cdot \nabla v \dd x \bigg) \\
 & = \int_{\partial \Omega} v (\partial_2 w \nu_1 - \partial_1 w \nu_2) \dd \sigma + \int_\Omega \partial_1 w \partial_2 v \dd x.
\end{align*}
Thus the assertion~\eqref{eq:fastSoAehnlich} follows if we can verify
\begin{align}\label{eq:BoundaryVanish}
 \int_{\partial \Omega} v (\partial_2 w \nu_1 - \partial_1 w \nu_2) \dd \sigma = 0.
\end{align}
In order to show this, let us denote by $\Gamma_1, \dots, \Gamma_N$ the distinct faces of $\partial \Omega$. For each $i \in \{1, \dots, N\}$ let 
\begin{align*}
 \sigma^i = \big( \sigma_1^i, \sigma_2^i, \sigma_3^i \big)^\top \quad \text{and} \quad \tau^i = \big( \tau_1^i, \tau_2^i, \tau_3^i \big)^\top
\end{align*}
be linearly independent tangential vectors of $\Gamma_i$. Since $u |_{\Gamma_i} = 0$ for each $i$, we have
\begin{align}\label{eq:tangentialId}
\begin{split}
 \sigma_l^i v |_{\Gamma_i} + \sigma_m^i w |_{\Gamma_i} + \sigma_M^i \partial_M u |_{\Gamma_i} & = 0, \\
 \tau_l^i v |_{\Gamma_i} + \tau_m^i w |_{\Gamma_i} + \tau_M^i \partial_M u |_{\Gamma_i} & = 0,
\end{split} \qquad i = 1, \dots, N.
\end{align}
Multiply the first identity in~\eqref{eq:tangentialId} by $\tau_M^i$ and the second identity by $\sigma_M^i$ and subtract the resulting equalities; this leads to
\begin{align}\label{eq:malsehen}
 \mu_1^i v |_{\Gamma_i} + \mu_2^i w |_{\Gamma_i} = 0,
\end{align}
where we have defined
\begin{align*}
 \mu_1^i = \sigma_l^i \tau_M^i - \tau_l^i \sigma_M^i \quad \text{and} \quad \mu_2^i = \sigma_m^i \tau_M^i - \tau_m^i \sigma_M^i.
\end{align*}
For each $i \in \{1, \dots, N\}$ we distinguish two cases. The first case is $\mu_2^i = 0$. If simultaneously $\mu_1^i \neq 0$ then~\eqref{eq:malsehen} yields $v |_{\Gamma_i} = 0$ and thus
\begin{align}\label{eq:zero}
 \int_{\Gamma_i} v (\partial_2 w \nu_1 - \partial_1 w \nu_2) \dd \sigma = 0.
\end{align}
If, conversely, $\mu_1^i = 0$, too, then the vectors $\tau_M^i \sigma^i$ and $\sigma_M^i \tau^i$ coincide and the linear independence of $\sigma^i$ and $\tau^i$ implies $\sigma_M^i = \tau_M^i = 0$. Using again the linear independence of $\sigma^i$ and $\tau^i$,~\eqref{eq:tangentialId} yields $v |_{\Gamma_i} = w |_{\Gamma_i} = 0$. Thus we arrive at~\eqref{eq:zero} whenever $\mu_2^i = 0$. The second case is $\mu_2^i \neq 0$. Here~\eqref{eq:malsehen} immediately yields
\begin{align}\label{eq:soSollEsSein}
 w |_{\Gamma_i} = - \frac{\mu_1^i}{\mu_2^i} v |_{\Gamma_i}.
\end{align}
Hence
\begin{align*}
\begin{split}
 \int_{\Gamma_i} v (\partial_2 w \nu_1 - \partial_1 w \nu_2) \dd \sigma & = - \frac{\mu_1^i}{\mu_2^i} \int_{\Gamma_i} v (\partial_2 v \nu_1 - \partial_1 v \nu_2) \dd \sigma \\
 & = - \frac{\mu_1^i}{2 \mu_2^i} \int_{\Gamma_i} \curl (v^2 e_3) \cdot \nu \dd \sigma \\
 & = - \frac{\mu_1^i}{2 \mu_2^i} \int_{\partial \Gamma_i} v^2 e_3 \cdot \widetilde \tau_i \dd s
\end{split}
\end{align*}
by Stokes' theorem, where $\widetilde \tau_i$ is the appropriate unit tangential vector of the piecewise linear curve $\partial \Gamma_i$ and $d s$ indicates integration along $\partial \Gamma_i$. Thus for any $i \in \{1, \dots, N\}$
we have shown
\begin{align}\label{eq:aha}
 \int_{\Gamma_i} v (\partial_2 w \nu_1 - \partial_1 w \nu_2) \dd \sigma = \eta_i \int_{\partial \Gamma_i} v^2 e_3 \cdot \widetilde \tau_i \dd s,
\end{align}
where
\begin{align*}
 \eta_i := \begin{cases} - \frac{\mu_1^i}{2 \mu_2^i}, & \text{if}~\mu_2^i \neq 0, \\ 0, & \text{if}~\mu_2^i = 0. \end{cases}
\end{align*}

We are going to conclude~\eqref{eq:fastSoAehnlich} from~\eqref{eq:aha}. Indeed, let $\Gamma_p$ and $\Gamma_q$ be any two faces of $\Omega$ which share a joint edge $\gamma = \partial \Gamma_p \cap \partial \Gamma_q$. We claim that
\begin{align}\label{eq:nochwas}
 \eta_p v |_\gamma = \eta_q v |_\gamma 
\end{align}
holds. In fact, if $\mu_2^p = 0$ then it follows as above that $v |_{\Gamma_p} = 0$ and, in particular, $v |_\gamma = 0$, which implies~\eqref{eq:nochwas} in this case. The case $\mu_2^q = 0$ is analogous. If both $\mu_2^p$ and $\mu_2^q$ are nonzero then~\eqref{eq:nochwas} is a direct consequence of~\eqref{eq:soSollEsSein}. Finally, we obtain from~\eqref{eq:aha} and~\eqref{eq:nochwas}
\begin{align*}
 \int_{\partial \Omega} v (\partial_2 w \nu_1 - \partial_1 w \nu_2) \dd \sigma & = \sum_{i = 1}^N \eta_i \int_{\partial \Gamma_i} v^2 e_3 \cdot \widetilde \tau_i \dd s \\
 & = \sum_{\gamma = \partial \Gamma_p \cap \partial \Gamma_q} \bigg( \eta_p \int_\gamma v^2 e_3 \cdot \widetilde \tau_p \dd s + \eta_q \int_\gamma v^2 e_3 \cdot \widetilde \tau_q \dd s \bigg) \\
 & = \sum_{\gamma = \partial \Gamma_p \cap \partial \Gamma_q} \eta_p \bigg( \int_\gamma v^2 e_3 \cdot \widetilde \tau_p \dd s - \int_\gamma v^2 e_3 \cdot \widetilde \tau_p \dd s \bigg) \\
 & = 0,
\end{align*}
where we have used that $\widetilde \tau_q = - \widetilde \tau_p$ holds on $\gamma$ if $\gamma = \partial \Gamma_p \cap \partial \Gamma_q$. This together with~\eqref{eq:BoundaryVanish} proves the assertion~\eqref{eq:fastSoAehnlich} for $u \in C^\infty (\overline \Omega) \cap H_0^1 (\Omega)$. For general $u \in H_0^1 (\Omega) \cap H^2 (\Omega)$ the claim follows through approximation, see, e.g.,~\cite[Corollaire~3.2]{G75}.

{\bf 3.} Let now $d > 3$. We use a dimension reduction trick from the proof of~\cite[Th\'eor\`eme~2.1]{G75}. Let $u \in H^2 (\Omega) \cap H_0^1 (\Omega)$ and let $j, k, m \in \{1, \dots, d\}$. Again we can assume that $j \neq k$ and $m \neq k$. We distinguish two cases. If $j = m$ then without loss of generality $j = m = 1$ and $k = 2$. For fixed $x_0 \in \R^{d - 2}$ we define
\begin{align*}
 \Omega_{x_0} = 
 \left\{ (x_1, x_2, x_0)^\top : (x_1, x_2, x_0)^\top \in \Omega \right\},
\end{align*}
the intersection of $\Omega$ with the plane $(0, 0, x_0)^\top + \spann \{\ee_1, \ee_2\}$. Then for almost all $x_0 \in \R^{d - 2}$ we have
\begin{align*}
 u |_{\Omega_{x_0}} \in H^2 (\Omega_{x_0}) \cap H_0^1 (\Omega_{x_0})
\end{align*}
and it follows from the result of the lemma for $d = 2$
\begin{align*}
 \int_{\Omega_{x_0}} (\partial_{1 2} u) (\partial_{1 2} u) \dd x = \int_{\Omega_{x_0}} (\partial_{1 1} u) (\partial_{2 2} u) \dd x
\end{align*}
for almost all $x_0 \in \R^{d - 2}$. Now integration over $x_0$ yields the claim. In the second case $j \neq m$ we apply the same procedure, intersecting $\Omega$ with shifts of $\spann \{\ee_j, \ee_k, \ee_m\}$ and using the result of the lemma for $d = 3$. This completes the proof.
\end{proof}

\section*{Acknowledgements}
VL gratefully acknowledges financial support by the grant No.\ 14-06818S of the Czech Science Foundation (GA\v{C}R). Moreover, he is grateful to Hamburg University of Technology for the hospitality during a visit in July 2016, when a part of this paper was written.


\end{document}